\DeclareMathOperator{\diam}{diam}
\DeclareMathOperator{\codim}{codim}
\DeclareMathOperator{\stalk}{stalk}
\DeclareMathOperator{\vol}{vol}
\DeclareMathOperator{\susp}{S}
\mathchardef\ordinarycolon\mathcode`\:
\begin{document}

\title{Transverse sphere theorems for Riemannian foliations}

\author{Francisco C.~Caramello Jr.}
\address{Departamento de Matemática, Universidade Federal de Santa Catarina, R. Eng. Agr. Andrei Cristian Ferreira, 88040-900, Florianópolis - SC, Brazil}
\email{francisco.caramello@ufsc.br}

\author{Francisco A. Neubauer}
\address{Departamento de Matemática, Universidade Federal de Santa Catarina, R. Eng. Agr. Andrei Cristian Ferreira, 88040-900, Florianópolis - SC, Brazil}
\email{francisco.neubauer@posgrad.ufsc.br}

\subjclass[2010]{Primary 53C12; Secondary 57R30}

%%% AMBIENTES %%%
\newenvironment{proofoutline}{\proof[Proof outline]}{\endproof}
\theoremstyle{definition}
\newtheorem{example}{Example}[section]
\newtheorem{definition}[example]{Definition}
\newtheorem{remark}[example]{Remark}
\theoremstyle{plain}
\newtheorem{proposition}[example]{Proposition}
\newtheorem{theorem}[example]{Theorem}
\newtheorem{lemma}[example]{Lemma}
\newtheorem{corollary}[example]{Corollary}
\newtheorem{claim}[example]{Claim}
\newtheorem{conjecture}[example]{Conjecture}
\newtheorem{thmx}{Theorem}
\renewcommand{\thethmx}{\Alph{thmx}} % "letter-numbered" theorems
\newtheorem{corx}{Corollary}
\renewcommand{\thecorx}{\Alph{corx}} % "letter-numbered" corollaries

%%% MACROS %%%
\newcommand{\dif}[0]{\mathrm{d}}
\newcommand{\od}[2]{\frac{d #1}{d #2}}
\newcommand{\pd}[2]{\frac{\partial #1}{\partial #2}}
\newcommand{\dcov}[2]{\frac{\nabla #1}{d #2}}
\newcommand{\proin}[2]{\left\langle #1, #2 \right\rangle}
\newcommand{\f}[0]{\mathcal{F}}
\newcommand{\g}[0]{\mathcal{G}}
\newcommand{\metric}{\ensuremath{\mathrm{g}}}
\newcommand{\tmetric}{\ensuremath{\mathrm{g}_\intercal}}
\newcommand{\dslash}[2]{#1 {/}\mkern-4mu{/} #2}

\begin{abstract}
We extend the classical theory of sphere theorems to the transverse geometry of Riemannian foliations. In this setting, we establish transverse analogues of the Grove–Shiohama diameter sphere theorem and of the Berger–Klingenberg quarter-pinched sphere theorem. First, we prove that if a Killing foliation of a compact, connected manifold has transverse sectional curvature greater than 1 and transverse diameter greater than $\pi/2$, then, after an arbitrarily small deformation, the resulting foliation has leaf space homeomorphic to a good spherical orbifold. Moreover, the space of leaf closures of the original foliation is realized as a further quotient of this spherical model by a torus action. Using this deformation theory we also prove that the space of leaf closures of a Killing foliation of a compact manifold is the Gromov--Hausdorff limit of a sequence of orbifolds. Under transverse quarter-pinching, we prove that this convergence is non-collapsing. As a consequence, we obtain that a complete Riemannian foliation with quarter-pinched transverse sectional curvature and codimension at least 3 develops on the universal cover to a simple foliation given by the fibers of a submersion onto the sphere.
\end{abstract}

\maketitle
\setcounter{tocdepth}{1}
\tableofcontents

\section{Introduction}
In essence, a Riemannian foliation is a foliation with locally equidistant leaves. These objects occur in many contexts, the more elementary one being the case of Riemannian submersions, whose fibers define \textit{simple} Riemannian foliations. In this spirit, incidentally, the study of the transverse geometry of more general Riemannian foliations allows one to extend the domains of classical Riemannian geometry to their rather singular leaf spaces. Another example is that of foliations given by the orbits of isometric actions, the so-called \emph{homogeneous} Riemannian foliations. Properties of this particular case also generalize; for instance, there is a robust structural theory for complete Riemannian foliations, due mainly to P.~Molino \cite{molino}, that establishes that the leaf closures of such a foliation ${\mathcal{F}}$ form another Riemannian foliation $\overline{{\mathcal{F}}}$, this one possibly singular. The dynamics of ${\mathcal{F}}$, therefore, is relatively constrained. It is, indeed, described by the action of a locally constant sheaf $\mathscr{C}_{\mathcal{F}}$ of Lie algebras of germs of transverse Killing vector fields (see Section \ref{section preliminaries} for more details). When this sheaf is globally constant, ${\mathcal{F}}$ is called a \emph{Killing} foliation, following \cite{mozgawa}. This class encompasses complete homogeneous Riemannian foliations, as well as complete Riemannian foliations on simply connected manifolds. For this reason, Killing foliations are particularly relevant in the study of developability, i.e., the question of whether it is possible to lift a given Riemannian foliation $(M,\f)$ to some cover of $M$ so that the lifted foliation is simple --- or, in other words, so that the holonomy of the foliation gets trivialized after the lift.

In this paper we are interested in obtaining transverse versions of famous sphere rigidity theorems from classical Riemannian geometry. More precisely, let us first recall the Grove--Shiohama diameter theorem \cite{grove}, which asserts that a connected, complete Riemannian manifold $M$ with sectional curvature satisfying $\sec_M\geq 1$ and whose diameter verifies $\diam_M > \pi/2$ must be homeomorphic to a sphere. Perelman further generalized this result to Alexandrov spaces \cite{perelman}, which we apply here in order to prove the following.

\begin{thmx}[Theorem \ref{theorem transverse diameter sphere}]\label{theoremA}
Let $\f$ be a $q$-codimensional Killing foliation of a connected, compact manifold $M$, with transverse sectional curvature and transverse diameter satisfying $\sec_\f > 1$ and $\diam_\intercal(\f) > \pi/2$, respectively. Then $M$ admits a foliation $\g$, arbitrarily close to $\f$, such that $M/\g$ is homeomorphic to the quotient of $\mathbb{S}^q$ by a finite group $\Gamma<\mathrm{O}(q+1)$. Moreover there is a continuous action of a torus $\mathbb{T}^d$ on $\mathbb{S}^q/\Gamma$ such that
\[\frac{M}{\overline{\f}}\cong \frac{\mathbb{S}^q/\Gamma}{\mathbb{T}^d}.\]
\end{thmx}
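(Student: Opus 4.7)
My plan is to prove Theorem~\ref{theoremA} by reducing to a Riemannian orbifold via approximation and then applying an orbifold version of the Grove--Shiohama--Perelman diameter sphere theorem. Since $\f$ is Killing, its structural sheaf is globally an abelian Lie algebra $\mathfrak{a}$ of dimension $d$, spanned by transverse Killing fields that integrate (on the basic manifold) to a $\mathbb{T}^d$-action preserving $\f$. The closures of $\f$-leaves are the $\mathbb{T}^d$-orbits of $\f$-leaves, so $M/\overline{\f}$ carries a compact Alexandrov metric of curvature $\geq 1$ and diameter $\geq \pi/2$.

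The first step would be to construct a Killing foliation $\g$ close to $\f$ with all leaves closed, obtained by a rational approximation of the generators of $\mathfrak{a}$ within the class of $\mathbb{T}^d$-invariant foliations. Then $\mathbb{T}^d$ descends to a continuous action on $M/\g$ whose orbit space is precisely $M/\overline{\f}$, and $M/\g$ is a compact Riemannian orbifold $O$ of dimension $q$. The curvature and diameter estimates pass to $O$: $\sec_O > 1$ (by the strict hypothesis on $\f$) and $\diam(O) \geq \pi/2$ (since the projection $M/\g \to M/\overline{\f}$ is a submetry).

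The principal step is then to conclude $O \cong \mathbb{S}^q/\Gamma$ for some finite $\Gamma < \mathrm{O}(q+1)$, an orbifold analog of Perelman's diameter sphere theorem. Because $\sec_O > 1$ is strict, a small metric rescaling yields $\sec_O \geq 1$ and $\diam(O) > \pi/2$, and Perelman's theorem in the Alexandrov category then presents the underlying space of $O$ as a spherical suspension. Upgrading this to the structure of a spherical space form should be achieved by passing to the orbifold universal cover $\widetilde{O}$, a compact simply connected Riemannian orbifold with the same curvature-diameter bounds: after establishing that $\widetilde{O}$ is homeomorphic to $\mathbb{S}^q$ (by an iterated suspension argument exploiting orbifold smoothness off the singular set), one descends via $\pi_1^{\mathrm{orb}}(O)$, whose deck action is by isometries of a round metric and hence realizable as a finite subgroup of $\mathrm{O}(q+1)$. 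I expect this to be the main obstacle, since Perelman's theorem alone yields only a topological suspension and upgrading it to a standard spherical quotient requires exploiting the full orbifold structure. Transporting the $\mathbb{T}^d$-action on $M/\g$ along the resulting homeomorphism $M/\g \cong \mathbb{S}^q/\Gamma$ finally produces the advertised continuous $\mathbb{T}^d$-action on $\mathbb{S}^q/\Gamma$ with orbit space $M/\overline{\f}$.
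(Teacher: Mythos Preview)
Your overall architecture matches the paper's: deform $\f$ to a nearby closed Riemannian foliation $\g$, so that $O=\dslash{M}{\g}$ is a compact Riemannian orbifold with $\sec_O>1$ and $\diam(O)\geq\pi/2$, and then invoke Perelman's Alexandrov version of the diameter sphere theorem; the $\mathbb{T}^d$-action on $O$ with orbit space $M/\overline{\f}$ comes along for free from the deformation theory. Where you diverge from the paper is in the ``principal step'', and that is where your argument has a genuine gap.

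You propose to identify $O$ with a linear spherical quotient by passing to the orbifold universal cover $\widetilde{O}$, arguing $\widetilde{O}\cong\mathbb{S}^q$, and then asserting that the deck action of $\pi_1^{\mathrm{orb}}(O)$ ``is by isometries of a round metric and hence realizable as a finite subgroup of $\mathrm{O}(q+1)$''. This last step is not justified: the deck group acts by isometries of the \emph{lifted} metric, which merely has $\sec>1$, not by isometries of the round metric. Converting an arbitrary finite smooth (or even isometric-for-some-metric) action on $\mathbb{S}^q$ into a linear one is a deep problem and is false in general. You would also need to first establish that $O$ is good (so that $\widetilde{O}$ is a manifold), and your ``iterated suspension argument'' for $\widetilde{O}\cong\mathbb{S}^q$ is left entirely unspecified.

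The paper avoids all of this with a one-line observation you are missing: Perelman's theorem identifies $O$ with the suspension $\susp(\partial B(L,r))$ for $L$ a point realizing the diameter and $r$ small. Because $O$ is an orbifold, for $r$ small enough the metric sphere $\partial B(L,r)$ is the image under $\exp_L$ of $\mathbb{S}^{q-1}(r)/\Gamma(L)$, where $\Gamma(L)\subset\mathrm{O}(q)$ is the \emph{local group} at $L$ acting linearly on $T_L O$. Hence
\[
O\ \cong\ \susp\bigl(\mathbb{S}^{q-1}/\Gamma(L)\bigr)\ \cong\ \bigl(\susp\,\mathbb{S}^{q-1}\bigr)/\Gamma(L)\ \cong\ \mathbb{S}^q/\Gamma(L),
\]
with $\Gamma(L)$ already sitting inside $\mathrm{O}(q)\hookrightarrow\mathrm{O}(q+1)$ by construction. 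No passage to universal covers, no linearization of group actions, and no iterated suspension are needed. I recommend you replace your universal-cover strategy with this direct identification of the suspended space of directions.
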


Alternatively, one can also obtain $M/\overline{\f}$ as a quotient of $\mathbb{S}^q$ by an extension of $\mathbb{T}^d$ by $\Gamma$ (see Corollary \ref{corollary of theorem A}). As elucidated in Section \ref{section preliminaries}, the number $d$ is the dimension of the structural algebra of $\f$, which equals $\dim(\overline{\f})-\dim(\f)$. Notice that, when $\f$ is the trivial foliation of $M$ by points, Theorem \ref{theoremA} reduces to the classical Grove--Shiohama diameter theorem.

We prove Theorem \ref{theoremA} via a deformation technique for Killing foliations that allows one to approximate a Killing foliation of a compact manifold by \textit{closed} Riemannian foliations --- i.e., Riemannian foliations whose leaves are all closed submanifolds. It traces back to Ghys \cite{ghys}, who established it in the simply connected case, and was later generalized by Haefliger and Salem to a broader class of Killing foliations in \cite{haefliger2}. In \cite{caramello} the authors showed that, in a technical sense, the transverse geometry of $\f$ varies continuously throughout the deformation, which then allows one to apply results of the Riemannian geometry of orbifolds in the study of its transverse geometry. Additionally, some transverse algebraic topological invariants, such as the basic Euler characteristic, remain constant during the deformations (see also \cite{caramello2}). Here we further extend this deformation theory by showing that it produces a sequence $\g_i$ of closed Riemannian foliations converging to $\f$, in the $C^0$ topology (see Proposition \ref{proposition uniform convergence of foliations}). Moreover, we show that, a bundle-like metric for $\f$ deforms to bundle-like metrics $\metric_i$ for $\g_i$, so that $\metric_i\to\metric$ uniformly (see Lemma \ref{lemma: C0 convergence of metrics}). With this machinery, we establish that lower bounds in $\diam_\intercal(\f)$ are maintained under this type of deformation (see Corollary \ref{corollary diameters of closed approximations}), which leads to Theorem \ref{theoremA}. With the same technique we prove:

\begin{thmx}[Theorem \ref{theorem regular convergence implies GH convergence}]\label{theoremB}
Let $\f$ be a Killing foliation of a connected, compact manifold $M$. Then there exists a sequence of closed Riemannian foliations $\g_i$ of $M$ such that $M/\g_i\to M/\overline{\f}$ in the Gromov--Hausdorff sense.
\end{thmx}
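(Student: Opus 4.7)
The plan is to produce the sequence $\g_i$ by applying the Ghys--Haefliger--Salem deformation technique \cite{ghys, haefliger2} already used in the proof of Theorem~\ref{theoremA}. Let $\mathfrak{a}$ denote the structural Killing algebra of $\f$, of some dimension $d$, and let $\mathbb{T}^d$ be the transverse torus integrating its action; its orbits on the transverse space correspond to the leaf closures of $\f$. The deformation associates to each parameter $\xi$ in an open neighborhood $U$ of the origin in $\mathfrak{a}^*$ a Killing foliation $\f_\xi$ of $M$, with $\f_0=\f$, and for $\xi$ in a dense subset $U_c\subset U$ the foliation $\f_\xi$ is closed, so that $M/\f_\xi$ is a Riemannian orbifold.

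The key structural observation is that every $\f_\xi$ refines $\overline{\f}$: each leaf of $\f_\xi$ is contained in some leaf closure of $\f$, because the deformation modifies $\f$ only by adjoining flows of transverse Killing fields from $\mathfrak{a}$, which are themselves tangent to the leaves of $\overline{\f}$. Hence, for $\xi\in U_c$, there is a canonical continuous surjection $\pi_\xi\colon M/\f_\xi\to M/\overline{\f}$, which is a submetry in the transverse quotient metrics, whose fibers may be identified, up to finite isotropy, with the homogeneous tori $\mathbb{T}^d/H_\xi$, where $H_\xi\leq\mathbb{T}^d$ is the closed subgroup parameterizing the deformation at $\xi$.

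The main step is to choose a sequence $\xi_i\in U_c$ along which the subgroups $H_{\xi_i}$ become uniformly dense in $\mathbb{T}^d$, so that $\diam(\mathbb{T}^d/H_{\xi_i})\to 0$ in the induced transverse metric. This is a Diophantine-type approximation: since $\f$ corresponds to a dense, non-closed subgroup of $\mathbb{T}^d$, the rational approximations $H_{\xi_i}\in U_c$ that lie close to this subgroup necessarily have periods of increasing denominator, producing arbitrarily small quotient diameters. Setting $\g_i:=\f_{\xi_i}$, the submetry property of $\pi_{\xi_i}$ then yields
\[
d_{GH}\bigl(M/\g_i,\,M/\overline{\f}\bigr)\;\leq\;\sup_{y\in M/\overline{\f}}\diam\bigl(\pi_{\xi_i}^{-1}(y)\bigr)\;\longrightarrow\;0,
\]
completing the proof.

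The hard part is ensuring that the fiber collapse is \emph{uniform} across all leaf closures of $\f$, and not merely pointwise: different leaves of $\overline{\f}$ may carry different isotropy within $\mathbb{T}^d$, so the relation between the deformation parameter $\xi$ and the resulting fiber diameter varies from leaf to leaf. The continuous variation of the transverse metric under the deformation established in \cite{caramello}, together with the compactness of $M$, should supply the needed uniform control, but making this precise requires attention to points of singular isotropy, where the local model of $\pi_{\xi_i}$ degenerates.
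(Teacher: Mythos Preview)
Your overall strategy---produce the closed foliations $\g_i$ via the Haefliger--Salem deformation and then show that the projection $M/\g_i\to M/\overline{\f}$ has fibers of vanishing diameter---is exactly the mechanism behind the paper's proof. However, the proposal has a conceptual misidentification and leaves the decisive step unargued.

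First, the torus bookkeeping is off. In the Haefliger--Salem model one has a dense contractible $H\subset\mathbb{T}^N$ and closed approximations $K_i\subset\mathbb{T}^N$ with $\mathfrak{k}_i\to\mathfrak{h}$; the foliations $\g_i$ correspond to $K_i$-orbits and $\overline{\f}$ to $\mathbb{T}^N$-orbits. The fibers of $M/\g_i\to M/\overline{\f}$ are therefore (finite quotients of) the $d$-tori $\mathbb{T}^N/K_i$, with $d=N-\dim H=\dim\mathfrak{a}$, and it is $K_i$ that becomes $\varepsilon$-dense in $\mathbb{T}^N$ as $i\to\infty$. There is no fixed $\mathbb{T}^d$ containing a sequence of closed subgroups $H_{\xi_i}$ becoming dense; the structural torus $\mathbb{T}^d$ is the \emph{quotient}, not the ambient group, so the statement ``$\diam(\mathbb{T}^d/H_{\xi_i})\to 0$'' does not parse as written. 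The Diophantine heuristic is the right intuition for why $K_i$ becomes $\varepsilon$-dense in $\mathbb{T}^N$, but it needs to be phrased at the level of $\mathbb{T}^N$.

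Second, and more importantly, you correctly flag uniform fiber collapse as the hard part and then stop. The paper does not attack this via an orbit-by-orbit estimate with singular-isotropy analysis. Instead it fixes $\varepsilon>0$, uses compactness of $M$ to choose finitely many leaf closures $J_1,\dots,J_r$ whose $\varepsilon/2$-tubes cover $M$, and only then invokes regular convergence to get $d_i(L,L')<\varepsilon/2$ for $\g_i$-leaves $L,L'$ inside any single $J_s$. This reduces the uniformity issue to finitely many leaf closures, sidestepping the isotropy stratification entirely. The GH estimate is then obtained not via the submetry--fiber-diameter inequality you propose (which would also work), but by exhibiting matched $\varepsilon$-nets $\{L_s\}\subset M/\g_i$ and $\{J_s\}\subset M/\overline{\f}$ and checking $|d_i(L_s,L_{s'})-d_\metric(J_s,J_{s'})|<\varepsilon$. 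The crucial metric input for this last comparison is Proposition~\ref{proposition g and gi induce same distance between leaf closures}: the metrics $d_i$ and $d_\metric$ agree on $M/\overline{\f}$, because geodesics orthogonal to $\overline{\f}$ have the same length in $\metric$ and $\metric_i$. Your outline never isolates this fact, and without it one cannot pass between the two quotient metrics.
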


Our next result can be seen as a transverse version of the celebrated Berger--Klingenberg quarter-pinched sphere theorem. Recall that this classical result asserts that a simply connected, complete Riemannian manifold $M$ with $1/4<\sec_M\leq 1$ is homeomorphic to a sphere. Some decades later, the differentiable version of the quarter-pinched sphere theorem was proved by Brendle and Schoen \cite{brendle} via Ricci flow methods. Incidentally, to obtain such improvement the authors relied on techniques from \cite{bohm} by Böhm and Wilking, which in turn establish that a complete, quarter-pinched orbifold $\mathcal{O}$ with $\dim\mathcal{O}\geq3$ is finitely developable: it is a quotient of a manifold by the isometric action of a finite group. By the differentiable quarter-pinched sphere theorem one then concludes that this manifold must be, furthermore, diffeomorphic to a sphere. In other words, the orbifold version of the quarter-pinched sphere theorem holds in dimensions greater than or equal to 3. Coupling it with Theorem \ref{theoremB} one proves that quarter-pinching prevents collapse in the convergence $M/\g_i\to M/\overline{\f}$, leading to:

\begin{thmx}[Theorem \ref{teoprincipal}]\label{theoremC}
Let $\f$ be a $q$-codimensional complete Riemannian foliation of a connected manifold $M$, with $1/4<\sec_\f< 1$ and $q\geq3$. Then $\f$ develops, on the universal covering $\hat{M}$, to a simple foliation $\hat{\f}$ given by the fibers of a submersion onto $\mathbb{S}^q$.
\end{thmx}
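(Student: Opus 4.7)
The plan is to combine Theorem~\ref{theoremB} with the strictly quarter-pinched sphere theorem for orbifolds (Böhm--Wilking together with Brendle--Schoen), and then use Molino's structural theory to upgrade the resulting identification of the leaf-closure space to a genuine simple foliation on the universal cover.

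As a preliminary step, I will reduce to a compact setting where Theorem~\ref{theoremB} is applicable. The hypothesis $\sec_\f > 1/4$ yields, via a transverse Bonnet--Myers argument (using transverse Ricci positivity), that the leaf-closure space $M/\overline{\f}$ has bounded diameter and is compact; working on the Molino basic manifold then places us in the range of the paper's deformation techniques. Invoking Theorem~\ref{theoremB} supplies a sequence of closed Riemannian foliations $\g_i$ with $M/\g_i \to M/\overline{\f}$ in the Gromov--Hausdorff sense, and since the Ghys--Haefliger--Salem deformation varies the transverse geometry continuously (as is established earlier in the paper), for $i$ large enough the $q$-dimensional quotient orbifolds $\mathcal{O}_i := M/\g_i$ still satisfy $1/4 < \sec_{\mathcal{O}_i} < 1$. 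By Böhm--Wilking each $\mathcal{O}_i$ is finitely developable, $\mathcal{O}_i \cong N_i/\Gamma_i$ for some simply connected, strictly quarter-pinched manifold $N_i$ and finite isometry group $\Gamma_i$; and by the Brendle--Schoen differentiable sphere theorem $N_i \cong \mathbb{S}^q$ diffeomorphically, so $\mathcal{O}_i \cong \mathbb{S}^q/\Gamma_i$. A GH subsequential limit then exhibits $M/\overline{\f}$ as a quotient of $\mathbb{S}^q$ by a closed subgroup of $\mathrm{O}(q+1)$, much as in Theorem~\ref{theoremA}.

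To finish, I will lift to the universal cover $\hat\pi \colon \hat M \to M$. The lifted foliation $\hat\f$ is Killing, inherits the transverse bounds $1/4 < \sec_{\hat\f} < 1$, and sits in a Molino picture analogous to that of $\f$. Running the same approximation argument for $\hat\f$, or, equivalently, observing that the orbifold universal cover of $\mathbb{S}^q/\Gamma_i$ is $\mathbb{S}^q$ and that lifting to the simply connected $\hat M$ trivializes the orbifold fundamental group of the leaf-closure space, shows that $\hat M/\overline{\hat\f}$ is homeomorphic to $\mathbb{S}^q$, with no orbifold singularities remaining in the limit. Molino's structural theorem then forces the structural torus acting in the Killing picture of $\hat\f$ to act trivially, so every leaf of $\hat\f$ is closed with trivial holonomy; hence $\hat\f$ is simple, and its leaf projection realizes $\hat M$ as a fibration over $\mathbb{S}^q$, as claimed.

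The main obstacle will be showing that the orbifold singularities of the $\mathcal{O}_i$ actually disappear in the limit when one passes to the universal cover. This amounts to controlling the orbifold fundamental groups $\pi_1^{\mathrm{orb}}(\mathcal{O}_i)$ under the Ghys--Haefliger--Salem deformation and establishing that lifting to the simply connected $\hat M$ trivializes them. Once that vanishing is at hand, the passage from a manifold leaf-closure space to a genuine simple foliation is formal from the Killing hypothesis and Molino's developability machinery; the codimension assumption $q \geq 3$ enters exactly to enable the Böhm--Wilking finite developability theorem.
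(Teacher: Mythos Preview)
Your overall strategy matches the paper's: lift to the universal cover, approximate by closed foliations, apply the Böhm--Wilking/Brendle--Schoen orbifold sphere theorem to the resulting quotient orbifolds, and conclude that $\hat\f$ is simple. But two genuine gaps separate your outline from a proof.

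\textbf{The compactness reduction is not what you think it is.} Theorem~\ref{theoremB} and the deformation machinery behind it require the ambient manifold to be \emph{compact}, not merely that the leaf-closure space be compact. Neither $M$ nor $\hat M$ is assumed compact, and transverse Bonnet--Myers only gives compactness of $M/\overline{\f}$. Your phrase ``working on the Molino basic manifold'' does not fix this: the basic manifold in Molino's structure theory is a different object and does not carry a foliation to which Theorem~\ref{theoremB} applies. The paper instead uses a result of Haefliger--Salem (their Theorem~3.7) to replace $(\hat M,\hat\f)$ by a transversely equivalent Killing foliation $(M',\f')$ on a \emph{compact} simply connected manifold, after first checking that $\pi_1(\mathscr{H}_{\hat\f})=0$ via the surjection $\pi_1(\hat M)\to\pi_1(\mathscr{H}_{\hat\f})$. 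Only then is Theorem~\ref{theoremB} invoked.

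\textbf{The main obstacle is not the one you identify.} Once one is on a simply connected compact $M'$, the vanishing of the orbifold fundamental groups $\pi_1^{\mathrm{orb}}(M'/\g_i')\cong\Gamma_i$ is immediate from the surjection $\pi_1(M')\to\pi_1(\f')\cong\pi_1^{\mathrm{orb}}(M'/\g_i')$; so each $M'/\g_i'$ is genuinely diffeomorphic to $\mathbb{S}^q$. The real difficulty, which your outline skips entirely, is that the Gromov--Hausdorff limit of a sequence of quarter-pinched $q$-spheres need not be a $q$-sphere: it could collapse to a lower-dimensional Alexandrov space, which is exactly what would happen if the structural algebra of $\f'$ were nontrivial. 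Your sentence ``Molino's structural theorem then forces the structural torus \dots\ to act trivially'' presupposes that the limit has dimension $q$, which is precisely what must be proved. The paper rules out collapse by a volume argument: the Cheeger--Gromoll injectivity-radius estimate for simply connected, strictly quarter-pinched manifolds gives $\mathrm{inj}(M'/\g_i')\geq\pi$, and Günther's volume comparison then yields a uniform lower volume bound $\vol(M'/\g_i')>\vol(B^{\mathbb{S}^q}(\pi))$, which prevents collapse. Only after this does one conclude $\dim M'/\overline{\f'}=q$, hence $\mathfrak{a}=0$, hence $\f'$ (and so $\hat\f$) is closed and its quotient is $\mathbb{S}^q$.
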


Our method of proof requires a strict pinching in $\sec_\f$, but we conjecture that the hypothesis $1/4<\sec_\f\leq 1$ is sufficient. It is worth noticing, for instance, that the conclusion in Theorem \ref{theoremC} holds when $\sec_\f\equiv 1$ (in this case, moreover, in any codimension). Indeed, if $\sec_\f\equiv 1$, then by the local version of the Cartan--Ambrose--Hicks Theorem one can assume that the local quotients $T_i$ of $\f$ (see Section \ref{section preliminaries}) are open sets of $\mathbb{S}^q$. Since the sphere is homogeneous, the corresponding holonomy transformations between them can be extended to global isometries. Therefore the foliation $\f$ is transversely homogeneous, in the sense of \cite{blumenthal}, and hence develops to a fibration over $\mathbb{S}^q$, by \cite[Theorem 1]{blumenthal}.

\section{Preliminaries}\label{section preliminaries}

We start this section by briefly defining the pertinent objects, mainly to fix our notation throughout the text. Then we recall some classical results on Riemannian foliations and establish Proposition \ref{proposition uniform convergence of foliations}, a thechnical tool to be used later. For simplicity, throughout the paper we work in the category of smooth manifolds and smooth maps, unless otherwise explicitly stated, and we will henceforth omit the adjective ``smooth''. The symbol $\cong$, however, will denote homeomorphisms (or isomorphisms, when applied to groups). A \textit{singular foliation} of an $n$-dimensional manifold $M$ is a partition ${\mathcal{F}}$ of $M$ into connected, immersed submanifolds, which we call \textit{leaves}, such that, for each $L\in{\mathcal{F}}$ and each $x\in L$, any given vector $v\in T_xL$ can be extended to a smooth vector field $V$ in the module $\mathfrak{X}({\mathcal{F}})$ of all vector fields on $M$ that take values in the distribution (of varying rank) $T{\mathcal{F}}$ formed by the tangent spaces of the leaves. The \textit{leaf space} $M/{\mathcal{F}}$ of ${\mathcal{F}}$ is the quotient of $M$ by the equivalence relation that identifies points on the same leaf. The set of all closures of leaves of $\f$ is $\overline{\f}:=\{\overline{L}\ |\ L\in\f\}$. When $\overline{\f}=\f$, we say that $\f$ is \textit{closed}. For each $x\in M$ we denote the leaf containing it by $\f(x)$. 

When all leaves have the same dimension $p$ we say that ${\mathcal{F}}$ is a \textit{regular foliation} of $M$, and that $q:=n-p$ is the \textit{codimension} of $\f$. From now on we will assume all foliations are regular, unless otherwise explicitly stated, and henceforth omit the adjective ``regular''. In this case, of course, $T{\mathcal{F}}$ is also a vector bundle over $M$, and we can consider the \textit{normal bundle} $\nu\f:=TM/T\f$ of $\f$. Moreover, $\f$ can also be defined in terms of an open cover $\{U_i\}_{i\in I}$ of $M$, submersions with connected fibers $\pi_i:U_i\to T_i$, where $T_i\subset\mathbb{R}^q$ are open subsets, and diffeomorphisms $\gamma_{ij}:\pi_j(U_i\cap U_j)\to\pi_i(U_i\cap U_j)$ such that $\gamma_{ij}\circ\pi_j|_{U_i\cap U_j}=\pi_i|_{U_i\cap U_j}$ for all $i,j\in I$. The collection $(U_i,\pi_i,\gamma_{ij})$ is a \textit{cocycle} representing $\f$. The  \textit{holonomy pseudogroup} of $\f$ associated to $\gamma:=\{\gamma_{ij}\}$ is the pseudogroup $\mathscr{H}_\gamma$ of local diffeomorphisms generated by $\gamma$ acting on the \textit{total transversal} $T_\gamma:=\bigsqcup_i T_i$. We denote by $\mathscr{H}_\f$ any element on its equivalence class under differentiable equivalence (in the sense of \cite[\S 2.2]{haefliger2}), when a specific choice of cocycle representing $\f$ is not important. It is clear that the orbit space $T_\f/\mathscr{H}_\f$ coincides with the leaf space $M/\f$.

The notion of fundamental group generalizes to pseudogroups via homotopy classes of $\mathscr{H}$-loops (see \cite[Appendix D]{molino} for details). For the holonomy pseudogroup $\mathscr{H}_\f$, in particular, this furnishes an invariant $\pi_1(\f,\underaccent{\check}{x})$ of $\f$, where $\underaccent{\check}{x}\in T_\f$ is a fixed base point. Similarly to its classical counterpart, its isomorphism class $\pi_1(\f)$ is independent of the base point when $M$ is connected. As described in \cite[Appendix D, Section 1.11]{molino}, there is a natural surjection
\begin{equation}\pi_1(M)\longrightarrow\pi_1(\f).\label{surjection fundamental groups}\end{equation}

A \textit{foliate field} on $M$ is a vector field whose flow preserves $\f$ or, equivalently, a vector field in the Lie subalgebra
$$\mathfrak{L}(\f)=\{X\in\mathfrak{X}(M)\ |\ [X,\mathfrak{X}(\f)]\subset\mathfrak{X}(\f)\}.$$
The quotient $\mathfrak{l}(\f):=\mathfrak{L}(\f)/\mathfrak{X}(\f)$ is the Lie algebra of \textit{transverse vector fields}. One also verifies that each $\overline{X}\in\mathfrak{l}(\f)$ defines a unique section of $\nu\f$ and corresponds to a unique $\mathscr{H}_\f$-invariant vector field on $T_\f$.

A \textit{transverse metric} $\tmetric$ for $\f$ is a symmetric $(2,0)$-tensor field on $M$ such that $\mathcal{L}_X\tmetric=0$ and $\iota_X\tmetric=0$, for all $X\in\mathfrak{X}(\f)$, and $\tmetric(Y,Y)>0$ when $Y\notin T\f$. The triple $(M,\f,\tmetric)$ is then called a \textit{Riemannian foliation}. Notice that in this case, if $(U_i,\pi_i,\gamma_{ij})$ is a cocycle representing $(M,\f)$, then $\tmetric$ projects through each $\pi_i$ to a Riemannian metric on $T_i$, and each $\gamma_{ij}$ becomes an isometry --- or, more concisely, $\mathscr{H}_\gamma$ becomes a pseudogroup of local isometries. Conversely, a Riemannian metric on $T_\gamma$ turning $\mathscr{H}_\gamma$ into a pseudogroup of local isometries coherently pulls, through each $\pi_i$, back to a transverse metric for $\f$.  By \textit{transverse curvature bounds} for $(\f,\tmetric)$ we mean bounds in the corresponding curvature of a transversal $T_\gamma$, with respect to the Levi-Civita connection of its induced metric. In particular, we say that $(\f,\tmetric)$ has sectional curvature bounded above/below by a constant $k$ when the sectional curvature of $T_\gamma$ is bounded above/below by $k$. Notice that this notion is independent of the choice of $T_\gamma$.

A genuine Riemannian metric $\metric$ on $M$ is called \textit{bundle-like} for $\f$ when, for any open set $U$ and any vector fields $Y,Z\in\mathfrak{L}(\f|_U)$ that are perpendicular to the leaves, the function $\metric(Y,Z)$ is \textit{basic}, i.e., constant along the leaves. Any bundle-like metric $\metric$ determines a transverse metric by
\begin{equation}\label{equation bundle-like vs transverse}
    \tmetric(X,Y):=\metric(X^\perp,Y^\perp)
\end{equation}
and, conversely, given $\tmetric$ one can always choose a bundle-like metric that induces it (see \cite[Proposition 3.3]{molino}). A (local) transverse vector field $\overline{X}$ is a \textit{(local) transverse Killing vector field} when $\mathcal{L}_{\overline{X}}\tmetric=0$ (see \cite[Section 3.3]{molino} for details).

We say that a Riemannian foliation $\f$ is \textit{complete} when it admits an associated bundle-like metric which is complete. There is a rich structural theory for complete Riemannian foliations, for which we refer to \cite{molino} or \cite{alex2} for more detailed introductions. Among other properties, it asserts that in this case $\overline{\f}$ is a singular foliation which is furthermore Riemannian with respect to any complete bundle-like metric $\metric$ associated to $\tmetric$, in the sense that each geodesic $\gamma$ that is orthogonal to a leaf closure remains orthogonal to all leaf closures it intersects \cite[Chapter 6]{molino}. A Riemannian metric with this property is said to be \textit{adapted} to $\overline{\f}$. Molino also shows that $\overline{\f}$ is described by the action of a locally constant sheaf $\mathscr{C}_{\mathcal{F}}$ of Lie algebras of germs of transverse Killing vector fields --- now called its \textit{Molino sheaf} ---, in the sense that, for each $x\in M$,
\begin{equation}\label{equation leaf closures described by molino sheaf}T_x\overline{\f(x)}=T_x\f(x)\oplus \{X_x \mid X\in\stalk_x(\mathscr{C}_{\mathcal{F}})\}.\end{equation}
When $M$ is connected, the typical stalk $\mathfrak{g}$ of $\mathscr{C}_{\mathcal{F}}$ is an important algebraic invariant of $\f$, called its \textit{structural algebra}.

A complete Riemannian foliation is a \textit{Killing foliation} when $\mathscr{C}_{\mathcal{F}}$ is globally constant. This class, therefore, includes all complete Riemannian foliations of simply connected manifolds. Another important subclass is that of complete homogeneous Riemannian foliations, that is, those defined by the connected components of the orbits of an isometric Lie group action. In fact, in this case $\mathscr{C}_{\f}$ is determined by the transverse Killing vector fields induced by the action of $\overline{H}<\mathrm{Iso}(M)$ (see \cite[Lemme III]{molino3}). The structural algebra of a Killing foliation is Abelian, and for this reason it is usually denoted by $\mathfrak{a}$. Notice that it is well defined even when $M$ is disconnected. It acts infinitesimally and transversely on $\f$ via the identification $\mathfrak{a}\equiv\mathscr{C}_{\f}(M)$, and in this sense equation \eqref{equation leaf closures described by molino sheaf} becomes $\overline{\f}=\mathfrak{a}\f$ (see also \cite{goertsches} or \cite{caramello2} for more details).

Any Killing foliation $\f$ of a compact manifold $M$ can be deformed into a closed Riemannian foliation $\g$. Our interest in this stems from the fact that, in this case, the leaf space $M/\g$ is naturally an orbifold (see \cite[Proposition 3.7]{molino}) --- a generalization of the concept of manifold that, instead of being locally Euclidean, is locally modeled on quotients of the Euclidean space by finite group actions (see \cite{caramello4} for an introduction). When seen as an orbifold, we will denote the leaf space by $\dslash{M}{\g}$. As pointed out in \cite{haefliger2}, in order to deform $\f$ one can apply \cite[Theorem 3.4]{haefliger2}, that establishes that $\mathscr{H}_\f$ is equivalent to the holonomy pseudogroup of a homogeneous foliation $\f_H$ of a compact orbifold $\mathcal{O}$, given by the orbits of a dense, contractible subgroup $H$ of a torus $\mathbb{T}^N$ that acts smoothly on $\mathcal{O}$. More precisely, the authors show that $(\mathcal{O},\f_H)$ is a realization of the classifying space of $\mathscr{H}_\f$, so that there exists a smooth map $\Upsilon:M\to \mathcal{O}$ such that $\f=\Upsilon^*(\f_H)$. If $\mathfrak{t}$ and $\mathfrak{h}$ are the Lie algebras of $\mathbb{T}^N$ and $H$, respectively, one can choose a path $\mathfrak{h}(t)$ on the Grassmannian $\mathrm{Gr}^{\dim\mathfrak{h}}(\mathfrak{t})$ connecting $\mathfrak{h}$ to a Lie subalgebra $\mathfrak{k}:=\mathfrak{h}(1)<\mathfrak{t}$ whose corresponding Lie subgroup $K$ is closed. Taking $\mathfrak{h}(t)$ in a sufficiently small neighborhood of $\mathfrak{h}$, so that the groups $H(t)$ all act almost freely and $\Upsilon$ remain transverse to all of the induced foliations $\f_{H(t)}$, one obtains a homotopy $\f_t:=\Upsilon^*(\f_{H(t)})$ of $\f$, deforming it into $\g:=\f_1$, which is closed since $K$ is closed.

In \cite{caramello} it is shown that the transverse geometry of $\f$ ``deforms smoothly'' and that some topological properties remain invariant under $\f(t)$. Below we gather what will be useful to us:
\begin{theorem}[{\cite[Theorem B]{caramello}}]\label{theorem deformations}
With the notation established above, the following holds:
\begin{enumerate}[(i)]
\item \label{item deformation respects leaf closures} The deformation occurs within the closures of the leaves of $\f$, in the sense that, for each $x\in M$, the tangent space $T_x\f_t(x)$ is a subspace of $T_x\overline{\f}(x)$, for all $t\in[0,1]$.
%\item $\g$ can be chosen arbitrarily close to $\f$, considering the foliations as sections of the Grassmannian bundle $\mathrm{Gr}^p(TM)$
\item \label{item deformation of transverse metric} The transverse metric $\tmetric$ deforms smoothly to transverse metrics $\iota_t(\tmetric)$ for $\f_t$, in the sense that $\iota_t(\tmetric)$ is a smooth time-dependent tensor field on $M$.
\item In particular, strict bounds on the transverse sectional curvature $\sec_\f$ can be maintained for $\sec_{\f_t}$.
\item \label{item quotient of deformation is orbit space of torus} The quotient orbifold $\dslash{M}{\g}$, endowed with $\iota(\tmetric):=\iota_1(\tmetric)$, admits an isometric action of a torus $\mathbb{T}^d$, where $d=\dim\mathfrak{a}$, such that $M/\overline{\f}\cong(M/\g)/\mathbb{T}^d$.
\end{enumerate}
\end{theorem}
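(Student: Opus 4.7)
My plan is to work within the Haefliger--Salem realization of $\f$ and exploit the fact that the deformation takes place at the level of the transverse orbifold, where a torus $\mathbb{T}^N$ acts on the orbifold cover $\widetilde{\mathcal{O}}$ of $\mathcal{O}$. The $H$-orbits descend to the foliation $\f_H$ on $\mathcal{O}$ with $\f=\Upsilon^*(\f_H)$, and the smooth path $\mathfrak{h}(t)\in\mathrm{Gr}_{\dim\mathfrak{h}}(\mathfrak{t})$ yields a smooth family of abelian Lie subgroups $H(t)<\mathbb{T}^N$. A compactness and openness argument then guarantees that, for $\mathfrak{h}(t)$ in a sufficiently small neighborhood of $\mathfrak{h}$, the actions of $H(t)$ remain almost free and $\Upsilon$ stays transverse to $\f_{H(t)}$, so that $\f_t:=\Upsilon^*(\f_{H(t)})$ is a bona fide foliation of $M$ throughout the deformation.

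For \emph{(i)}, I would observe that $\mathfrak{h}(t)\subset\mathfrak{t}$ forces every $H(t)$-orbit in $\widetilde{\mathcal{O}}$ to lie inside a $\mathbb{T}^N$-orbit, and density of $H$ in $\mathbb{T}^N$ identifies the $\mathbb{T}^N$-orbits with the closures of the $H$-orbits; pulling back by $\Upsilon$ then yields $T_x\f_t(x)\subseteq T_x\overline{\f}(x)$. Item \emph{(ii)} is a continuity statement: the pullback of a foliation by a fixed transverse map depends continuously on the foliation as a section of the Grassmannian bundle, so $\g=\f_1$ can be made arbitrarily close to $\f$ by choosing $\mathfrak{h}(1)$ close to $\mathfrak{h}$. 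For \emph{(iii)}, I would fix a $\mathbb{T}^N$-invariant Riemannian metric on $\widetilde{\mathcal{O}}$ and let $\iota_t(\tmetric)$ be the $\Upsilon$-pullback of the restriction of this metric to the normal bundle of $\f_{H(t)}$; smoothness in $t$ follows from the smooth dependence of those normal bundles on $\mathfrak{h}(t)$. Item \emph{(iv)} is then a continuity argument as well, since the sectional curvature of a fixed ambient metric on the varying normal slices depends continuously on the slice, so strict bounds persist along a short enough deformation.

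For \emph{(v)}, when $K:=H(1)$ is closed the quotient $M/\g$ inherits an orbifold structure naturally identified with $\mathcal{O}/K$. The residual action of $\mathbb{T}^N/K$ on this quotient is isometric with respect to $\iota(\tmetric)$, and its orbit space is $\mathcal{O}/\mathbb{T}^N\cong M/\overline{\f}$. A dimension count via \eqref{equation leaf closures described by molino sheaf} gives $\dim(\mathbb{T}^N/K)=N-\dim\mathfrak{h}=\dim\mathfrak{a}=d$, matching the claim. The main obstacle, as I see it, is to rigorously establish the smooth time-dependence required in \emph{(iii)} and the continuity needed in \emph{(iv)} in the presence of orbifold singularities on $\mathcal{O}$, and to verify carefully that the pullback construction yields a genuinely smooth family of foliations and transverse metrics on $M$, rather than merely a pointwise family of tensors.
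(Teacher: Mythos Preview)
The paper does not actually prove this theorem: it is quoted verbatim from \cite[Theorem~B]{caramello}, and the only argument the present paper supplies is the paragraph immediately preceding the statement, which recalls the Haefliger--Salem construction (the orbifold $\mathcal{O}$, the torus $\mathbb{T}^N$, the dense subgroup $H$, the path $\mathfrak{h}(t)$ in the Grassmannian, and the pullback $\f_t=\Upsilon^*(\f_{H(t)})$). Your outline reproduces precisely that construction and then indicates, item by item, why each assertion should hold; this is the same approach taken in the cited reference, so there is no methodological divergence to comment on.

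A couple of minor remarks. You introduce an orbifold cover $\widetilde{\mathcal{O}}$ and let $\mathbb{T}^N$ act there, whereas the paper (and \cite{haefliger2}) has $\mathbb{T}^N$ acting directly on the orbifold $\mathcal{O}$; this is harmless but unnecessary. Your dimension count in (v), $\dim(\mathbb{T}^N/K)=N-\dim\mathfrak{h}=\dim\mathfrak{a}$, is correct but does not follow from \eqref{equation leaf closures described by molino sheaf} alone: one also needs that the structural algebra of $\f$ is identified with $\mathfrak{t}/\mathfrak{h}$ via the Haefliger--Salem model, which is part of the content of \cite{haefliger2}. Finally, the technical issues you flag at the end---smooth time-dependence of the transverse metric and persistence of curvature bounds through orbifold charts---are exactly the points that require work, and they are handled in \cite{caramello} rather than here.
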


Here we will be primarily interested in using these deformations in order to obtain a sequence $\g_i$ of closed foliations with the aforementioned properties and so that $\g_i\to \f$, in the sense that $T\g_i\to T\f$, as sections of the Grassmannian $\mathrm{Gr}^p(TM)$. By what we saw above, to do so we are led to consider a sequence of Lie subalgebras $\mathfrak{k}_i\subset\mathfrak{t}$ converging to $\mathfrak{h}$, and take $\g_i$ as the corresponding foliations --- one can even interpolate a path trough the points $\mathfrak{h}_i$ to make the sequence part of a deformation, if desired. Following the terminology in \cite{caramello2} for deformations, we will refer to such a sequence $\g_i$ as a \textit{regular} sequence of closed foliations converging to $\f$, or simply say that $\g_i\to \f$ \textit{regularly}. In this case we will denote the respective induced transverse metrics by $\iota_i(\tmetric)$. Notice that, by Theorem \ref{theorem deformations}(\ref{item deformation respects leaf closures}), for any $x\in M$ we have $\g_i(x)\subset \overline{\f}(x)$.

Let us now establish that regular convergence implies that $T_x\g_i(x)\to T_x\f(x)$, uniformly on $x$. For this, recall first that an inner product $\proin{\ }{\ }$ on a vector space $V$ provides an embedding of $\mathrm{Gr}^{\ell}(V)$ into $\operatorname{End}(V)$, by associating $E\in \mathrm{Gr}^{\ell}(V)$ to the orthogonal projection $P_E: V \to V$ onto $E$. Then there is a natural distance function on $\mathrm{Gr}^{\ell}(V)$ defined via this embedding:
\[d_\mathrm{Gr}(E, F) := \|P_E - P_F\|_{\text{op}} = \sup_{\|v\| = 1} \|P_E(v) - P_F(v)\|.\]

\begin{proposition}\label{proposition uniform convergence of foliations}
    If $\g_i\to \f$ regularly, then $T\mathcal{G}_i \to T\mathcal{F}$ uniformly, that is, in the $C^0$ topology of $\Gamma(\mathrm{Gr}^p(M))$.
\end{proposition}

\begin{proof}
    Denote $\ell:=\dim\mathfrak{h}$. Passing to the infinitesimal $\mathbb{T}^N$-action, let $\operatorname{ev}_o\colon \mathfrak{t} \to T_o\mathcal{O}$ denote the evaluation of the fundamental vector fields at $o\in\mathcal{O}$. Since $H$ acts locally freely, each $\operatorname{ev}_o$ will remain injective near $\mathfrak{h}$: the topology of $\mathrm{Gr}^p(T_o\mathcal{O})$ garantees that injectivity is an open condition. Therefore, there is a tube $K=\mathcal{O}\times B$ around $\mathcal{O}\times\{\mathfrak{h}\}$ in $\mathcal{O} \times \mathrm{Gr}^{\ell}(\mathfrak{t})$ where the map
    \begin{align*}
    \operatorname{ev}\colon K  & \longrightarrow \mathrm{Gr}^{\ell}(T\mathcal{O})\\
    (o, \mathfrak{s}) & \longmapsto \operatorname{ev}_{o}(\mathfrak{s})
    \end{align*}
    is well defined. It is also continuous, since $\mathbb{T}^N \begin{tikzcd}[ampersand replacement=\&, cramped, sep=small] \& \arrow[loop left] \phantom{X} \end{tikzcd} \hspace{-8pt} \mathcal{O}$ is smooth, and we can further assume that it is uniformly continuous by passing to a compact $K$ if necessary. By construction, we have $d_\mathrm{Gr}(\mathfrak{k}_i,\mathfrak{h})\to 0$. The uniform continuity of $\operatorname{ev}$ then gives us
    \[\sup_{o\in\mathcal{O}}d_\mathrm{Gr}(\operatorname{ev}_{o}(\mathfrak{k}_i),\operatorname{ev}_{o}(\mathfrak{h}))=\sup_{o\in\mathcal{O}}d_\mathrm{Gr}(T_o\f_{K_i},T_o\f_H)\to 0,\]
    that is, $T\f_{K_i} \to T\mathcal{F}_H$ uniformly.
    
    Now the compactness of $M$ allows us to, similarly, pass this uniform convergence through the pullback. In more detail, let $\sigma \colon M\to \mathrm{Gr}^\ell(\mathcal{O})$ be given by $\sigma(x):=T_{\Upsilon(x)}\f_H$ and, similarly, define $\sigma_i(x):=T_{\Upsilon(x)}\f_{K_i}$. Notice that these can be seen as sections of the pullback bundle $\Upsilon^*\mathrm{Gr}^\ell(\mathcal{O})$, and that the uniform convergence $T\f_{K_i} \to T\mathcal{F}_H$ immediately implies $\sigma_i\to\sigma$ uniformly. Let $\Upsilon^*\mathrm{Gr}^\ell(\mathcal{O})^\pitchfork$ be the subset of $\Upsilon^*\mathrm{Gr}^\ell(\mathcal{O})$ consisting of the $\Upsilon$-transverse pairs $(x,E)$, that is, those verifying
    \[\operatorname{Im}(\dif \Upsilon_x)+E=T_{\Upsilon(x)}\mathcal{O}.\]
    Since $\dif \Upsilon_x$ varies smoothly on $x$ and transversality is an open condition, it follows that $\Upsilon^*\mathrm{Gr}^\ell(\mathcal{O})^\pitchfork$ is open in $\Upsilon^*\mathrm{Gr}^\ell(\mathcal{O})$. It also clearly contains the images $\sigma(M)$ and $\sigma_i(M)$, for all $i$, by construction. Moreover, since $\sigma_i\to \sigma$ uniformly, we can find a compact tubular neighborhood $K'\subset\Upsilon^*\mathrm{Gr}^\ell(\mathcal{O})^\pitchfork$ around $\sigma(M)$ containing all $\sigma_i(M)$, for $i$ suficiently large. Transversality ensures that the map
    \begin{align*}
    \Psi\colon \Upsilon^*\mathrm{Gr}^\ell(\mathcal{O})^\pitchfork & \longrightarrow \mathrm{Gr}^p(M)\\
     (x,E) & \longmapsto (\dif \Upsilon_x)^{-1}(E)
    \end{align*}
     is well defined and continuous. Restricted to $K'$, it is furthermore uniformly continuous. Hence, since $\sup_{x\in M} d_\mathrm{Gr}(\sigma_i(x),\sigma(x))\to 0$, we obtain
     \begin{align*}
\sup_{x\in M} d_\mathrm{Gr}(T_x\g_i,T_x\f) & = \sup_{x\in M} d_\mathrm{Gr}((\dif \Upsilon_x)^{-1}T_{\Upsilon(x)}\f_{K_i},(\dif \Upsilon_x)^{-1}T_{\Upsilon(x)}\f_H)\\
 & = \sup_{x\in M} d_\mathrm{Gr}(\Psi(\sigma_i(x)),\Psi(\sigma(x)))\longrightarrow 0 \qedhere
     \end{align*}
\end{proof}

\color{black}

\section{Transverse diameter and deformations}\label{section: diameter and deformations}

In this section we will prove Theorem \ref{theoremA}. We start by defining and briefly studying the notion of transverse diameter. Let $(M,\f,\tmetric)$ be a Riemannian foliation. We define the \textit{transverse length} of a piecewise-smooth curve $\alpha:[a,b]\to M$ by
\[\ell_\intercal(\alpha):=\int_a^b \sqrt{\tmetric(\alpha'(t),\alpha'(t))}\;dt.\]
If $M$ is connected, for $L,L'\in\f$ we can then consider the \textit{transverse pseudodistance} $d_\intercal(L,L')=\inf\ell_\intercal(\alpha)$, where the infimum runs over the set of all piecewise-smooth curves $\alpha$ starting at $L$ and ending at $L'$. Endowed with $d_\intercal$, the leaf space $M/\f$ becomes a pseudometric space. In fact, clearly $d_\intercal$ is symmetric and satisfies $d_\intercal(L,L)=0$ for any $L\in\f$. The triangle inequality stems from the fact that, for $L_1,L_2,L_3\in\f$, if $\alpha_{21}$ and $\alpha_{13}$ connect $L_2$ to $L_1$ and $L_1$ to $L_3$, respectively, then one can choose $\alpha$ inside $L_1$ such that $\alpha_{21}*\alpha*\alpha_{13}$ connects $L_2$ to $L_3$, leading to the result since $\ell_\intercal(\alpha)=0$. Notice, furthermore, that positivity fails in general, since non-trivial leaf closures contain other leaves.

We define the \textit{transverse diameter} of $\f$ to be $\diam(M/\f,d_\intercal)$, that is,
\[\diam_\intercal(\f)=\sup_{L,L'\in\f}d_\intercal(L,L').\]

Now suppose $(\f,\tmetric)$ is complete and let $\metric$ be any complete bundle-like metric for it. Because $\metric$ is adapted to $\overline{\f}$, the induced Husdorff distance between leaf closures $J_1,J_2\in\overline{\f}$ --- i.e., the infimum of lengths of curves joining them --- is constant, in the sense that, for any $p \in J_1$,
    \[d_\metric(p, J_2) = d_\metric(J_1, J_2)\]
    \cite[Proposition 2.3]{radeschi}. This ensures that $d_\metric$ indeed defines a distance function on $M/\overline{\f}$. In particular, $(M/\f,d_\metric)$ is again a pesudometric space.
    
Endowing $M/\overline{\f}$ with $d_\metric$, the projection $\pi\colon M\to M/\overline{\f}$ becomes a strong submetry \cite[Proposition 3.33]{radeschi}, that is, $\pi(B_r[x])=B_r[\pi(x)]$, for all $x\in M$ and all $r>0$. In particular, one concludes that $M/\overline{\f}$ is boundedly compact, hence complete, by the Hopf--Rinow theorem for length spaces \cite[Theorem 2.5.28 and Remark 2.5.29]{burago} (it is locally compact since $M$ is). In other words, we showed that, if $\f$ is complete, then $M/\overline{\f}$ is a geodesic space.

\begin{proposition}\label{proposition transverse diameter versus diameter of quotient}
    Let $(M,\f,\tmetric)$ be a complete Riemannian foliation of a connected manifold $M$. Then
    \[\diam_\intercal(\f)=\diam(M/\f,d_\metric)=\diam(M/\overline{\f},d_\metric),\]
    regardless of the chosen complete bundle-like metric $\metric$ associated with $\tmetric$.
\end{proposition}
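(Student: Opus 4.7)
The plan is to establish the pointwise equality $d_\intercal(L, L') = d_\metric(L, L')$ for every pair of leaves (where $d_\metric$ denotes the set distance induced by the bundle-like metric), and then to match the resulting suprema with $\diam(M/\overline{\f}, d_\metric)$ via a density argument.

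The inequality $d_\intercal(L, L') \leq d_\metric(L, L')$ is immediate from \eqref{equation inequality transverse length}: any piecewise-smooth curve $\alpha$ between $L$ and $L'$ satisfies $\ell_\intercal(\alpha) \leq \ell_\metric(\alpha)$, and passing to infima gives the bound. Hence $\diam_\intercal(\f) \leq \diam(M/\f, d_\metric)$.

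For the reverse inequality I plan to invoke the standard horizontal-lift property of bundle-like metrics. Given $\varepsilon > 0$ and $\alpha : [0,1] \to M$ with $\alpha(0) \in L$, $\alpha(1) \in L'$, and $\ell_\intercal(\alpha) < d_\intercal(L, L') + \varepsilon$, I would partition $[0,1]$ by a finite cover of foliation charts $(U_i, \pi_i, T_i)$. On each $U_i$, the projection $\pi_i \circ \alpha|_{U_i}$ is a curve in $T_i$ whose length equals $\ell_\intercal(\alpha|_{U_i})$, and lifting it horizontally with respect to $T\f^\perp$ starting at the terminal point of the previous segment produces a horizontal curve in $U_i$ of equal $\metric$-length whose endpoint lies on the same plaque as $\alpha|_{U_i}$'s endpoint, hence on the correct leaf. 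Completeness of $\metric$ ensures the construction extends to the whole interval; concatenating yields a horizontal curve $\tilde\alpha$ from $\alpha(0) \in L$ to some point $y \in L'$ with $\ell_\metric(\tilde\alpha) = \ell_\intercal(\alpha)$. Therefore $d_\metric(L, L') \leq d_\metric(\alpha(0), y) \leq \ell_\metric(\tilde\alpha) < d_\intercal(L, L') + \varepsilon$, and letting $\varepsilon \to 0$ and taking suprema over $(L, L')$ gives $\diam(M/\f, d_\metric) \leq \diam_\intercal(\f)$.

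For the final equality $\diam(M/\f, d_\metric) = \diam(M/\overline{\f}, d_\metric)$: the inclusions $\overline{L} \supset L$ and $\overline{L'} \supset L'$ give $d_\metric(\overline{L}, \overline{L'}) \leq d_\metric(L, L')$ directly; conversely, for any $x \in \overline{L}$ and $y \in \overline{L'}$ one picks sequences $x_n \in L$, $y_n \in L'$ with $x_n \to x$, $y_n \to y$, so that $d_\metric(L, L') \leq d_\metric(x_n, y_n) \to d_\metric(x, y)$, whence passing to the infimum yields the reverse inequality. Taking suprema completes the chain, and independence from the choice of complete bundle-like metric $\metric$ is automatic since $\diam_\intercal(\f)$ depends only on $\tmetric$. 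I expect the main technical obstacle to be the gluing of local horizontal lifts: the endpoint of one lift and the starting point of the next both lie on the same plaque but need not coincide, so one must exploit completeness of $\metric$ to propagate the lift globally without leaving the domain or escaping to infinity, and to align successive charts along leaves.
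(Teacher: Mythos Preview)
Your outline is correct and close in spirit to the paper's argument; the organization differs. You aim for the pointwise identity $d_\intercal(L,L')=d_\metric(L,L')$ and then take suprema; the paper instead first passes to $\overline{\f}$, picks an actual minimizing geodesic $\gamma$ between leaf closures $J,J'$ (available since $(M/\overline{\f},d_\metric)$ is complete), notes $\ell_\metric(\gamma)=\ell_\intercal(\gamma)$ by horizontality, and then shows by contradiction that $\gamma$ also realizes $d_\intercal$ between the leaves through its endpoints. Your route is marginally stronger---it yields the paper's subsequent corollary that $d_\intercal=d_\metric$ on $M/\f$ for closed $\f$ as a byproduct---but the underlying mechanism (locally replace a curve by horizontal segments of the same transverse length) is the same.

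The ``gluing obstacle'' you flag is real for the construction as you state it---the endpoint of the $i$-th lift need not lie in $U_{i+1}$, so you cannot simply continue the lift there---but it is a red herring for the inequality you actually need. Do not insist on a single continuous horizontal curve: start each lift at $\alpha(t_{i-1})$ itself, obtaining a horizontal segment within $U_i$ from a point of $\f(\alpha(t_{i-1}))$ to a point of $\f(\alpha(t_i))$ with $\metric$-length at most $\ell_\intercal(\alpha|_{[t_{i-1},t_i]})$. The triangle inequality for $d_\metric$ on $M/\f$ then gives
\[
d_\metric(L,L')\ \le\ \sum_i d_\metric\bigl(\f(\alpha(t_{i-1})),\f(\alpha(t_i))\bigr)\ \le\ \sum_i \ell_\intercal\bigl(\alpha|_{[t_{i-1},t_i]}\bigr)\ =\ \ell_\intercal(\alpha),
\]
with no gluing required. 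This is exactly how the paper handles the corresponding step, after first replacing each projected arc by the minimal geodesic in a convex transversal $T_i$; that substitution is convenient because horizontal lifts of geodesics are horizontal geodesics in $M$ and manifestly stay inside the chart, but it is not essential to the argument.
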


\begin{proof}
First notice that, by equation \eqref{equation bundle-like vs transverse},
\begin{equation}\label{equation inequality transverse length}
    \ell_\intercal(\alpha)\leq \ell_\metric(\alpha)
\end{equation} 
for any piecewise-smooth curve $\alpha:[a,b]\to M$, where $\ell_\metric$ is the usual length induced by $\metric$. Moreover, it is clear that equality holds if, and only if, $\alpha$ is $\metric$-orthogonal to all leaves it intersects.

Since $d_\metric(L,L')=d_\metric(\overline{L},\overline{L'})$, for any given leaves $L,L'\in\f$, we have
\[\diam(M/\f,d_\metric)=\sup_{L,L'\in\f}d_\metric(L,L')=\sup_{J,J'\in\overline{\f}}d_\metric(J,J')=\diam(M/\overline{\f},d_\metric).\]
Moreover, from equation \eqref{equation inequality transverse length}, given any $L,L'\in\f$ we obtain $d_\intercal(L,L')\leq d_\metric(L,L')$, and hence $\diam_\intercal(\f)\leq \diam(M/\f,d_\metric)$.

On the other hand, given $J,J'\in\overline{\f}$, let $\gamma:[a,b]\to M$ be a geodesic realizing the distance between them. It exists since in our situation $M/\overline{\f}$ is a geodesic space, as we saw above; then one can take $\gamma$ to be any horizontal geodesic projecting to a minimizing geodesic between $J$ and $J'$ on $M/\overline{\f}$. Since $\gamma$ is perpendicular to all leaf closures it intersects, the same holds for all leaves of $\f$ it intersects, so
\[d_\metric(J,J')=\ell_\metric(\gamma)=\ell_\intercal(\gamma).\]
We claim that $\ell_\intercal(\gamma)=d_\intercal(\f(\gamma(a)),\f(\gamma(b)))$. In fact, suppose this does not happen. Then there exists a piecewise-smooth curve $\beta:[c,d]\to M$, starting at $\f(\gamma(a))$ and ending at $\f(\gamma(b))$, with $\ell_\intercal(\beta)<\ell_\intercal(\gamma)$. By compactness, we can cover $\beta([c,d])$ with finitely many open sets $U_1,\dots,U_k$ in a cocycle representing $\f$. We can suppose further, without loss of generality, that each corresponding local quotient $T_i$ is geodesically convex with respect to the Riemannian metric induced by $\tmetric$, and that they are indexed so that $U_i\cap U_{i+1}\neq \emptyset$ for $i=1,\dots,k-1$. Choose a point $x_i=\beta(t_i)$ on each of these intersections and define $x_0:=\beta(c)$ and $x_k:=\beta(d)$. For each $i=1,\dots,k$, let $\underaccent{\check}{\gamma}_i$ be the unique minimal geodesic on $T_i$ connecting $\pi_i(x_{i-1})$ to $\pi_i(x_i)$. Since the Riemannian metrics induced on $T_i$ by $\tmetric$ and $\metric$ coincide; we will denote the length of a curve $\underaccent{\check}{\alpha}$ on $T_i$ simply by $\ell(\underaccent{\check}{\alpha})$. Each $\underaccent{\check}{\gamma}_i$ lifts horizontally to a geodesic segment $\gamma_i$ on $U_i$, which is therefore orthogonal to all leaves it intersects and satisfies $\ell_\intercal(\gamma_i)=\ell_\metric(\gamma_i)=\ell(\underaccent{\check}{\gamma}_i)$ (see \cite[Proposition 3.5]{molino} for details). Therefore $\ell_\intercal(\gamma_i)=\ell_\metric(\gamma_i)\geq d_\metric(\f(x_{i-1}),\f(x_i))$. Since also $\ell(\underaccent{\check}{\gamma}_i)\leq \ell(\pi_i\circ \beta|_{[x_{i-1},x_i]})=\ell_\intercal(\beta|_{[x_{i-1},x_i]})$, it follows
\begin{align*}
d_\metric(J,J') & =d_\metric(\f(x_0),\f(x_k))\leq \sum_{i=1}^k d_\metric(\f(x_{i-1}),\f(x_i))\\
 & \leq \sum_{i=1}^k \ell_\intercal(\gamma_i) \leq \sum_{i=1}^k \ell_\intercal(\beta|_{[x_{i-1},x_i]}) = \ell_\intercal(\beta)\\
 & <\ell_\intercal(\gamma)=d_\metric(J,J'),
\end{align*}
a contradiction. We therefore have $\ell_\metric(\gamma)=d_\intercal(\f(\gamma(a)),\f(\gamma(b)))$, so taking the supremum on both sides, over all geodesics minimizing the distance between two leaf closures, we obtain $\diam(M/\overline{\f},d_\metric)\leq \diam_\intercal(\f)$.
\end{proof}

It is worth noticing the following corollary of the above proof.

\begin{corollary}
Let $(M,\f,\tmetric)$ be a closed, complete Riemannian foliation of a connected manifold $M$. Then $(M/\f,d_\intercal)$ and $(M/\f,d_\metric)$ are isometric, for any complete bundle-like metric $\metric$ associated with $\tmetric$.
\end{corollary}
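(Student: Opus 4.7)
The plan is to observe that when $\f$ is closed one has $\overline{\f}=\f$, so the pointwise equality $d_\metric(J,J')=d_\intercal(\f(\gamma(a)),\f(\gamma(b)))$ that was established inside the proof of Proposition \ref{proposition transverse diameter versus diameter of quotient} actually already says the identity map on $M/\f$ is an isometry, once one takes the supremum out of the picture. So the task is essentially to isolate that pointwise statement and verify that it applies to every pair of leaves, not only to the two leaves realizing the diameter.

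First I would fix $L,L'\in\f$ and produce a minimizing geodesic $\gamma:[a,b]\to M$ between them. Since $\f$ is closed, each leaf is a closed embedded submanifold of $M$ (equivalently, $M/\f$ is Hausdorff), and $\metric$ is a complete bundle-like metric on $M$, so a minimizer exists and has length $d_\metric(L,L')$. Because $\overline{\f}=\f$, this $\gamma$ is perpendicular to every leaf it meets (this is the property of complete bundle-like metrics relative to $\overline{\f}$ invoked in the proof of the proposition), and therefore $\ell_\metric(\gamma)=\ell_\intercal(\gamma)$ by the equality case of \eqref{equation inequality transverse length}.

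Second, I would repeat, verbatim, the cover-and-approximate argument from the proof of Proposition \ref{proposition transverse diameter versus diameter of quotient}: if there existed a piecewise-smooth curve $\beta$ from $L$ to $L'$ with $\ell_\intercal(\beta)<\ell_\intercal(\gamma)$, one covers $\beta([c,d])$ by finitely many distinguished open sets with geodesically convex local quotients, horizontally lifts the plaque-wise minimizers, and assembles them into a broken horizontal curve of $\metric$-length bounded above by $\ell_\intercal(\beta)$. This produces a chain of leaves joining $L$ to $L'$ whose total $d_\metric$-distance is strictly less than $\ell_\metric(\gamma)=d_\metric(L,L')$, contradicting the triangle inequality. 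Hence $\ell_\intercal(\gamma)=d_\intercal(L,L')$, and combining the two identities yields $d_\metric(L,L')=d_\intercal(L,L')$.

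Since the inequality $d_\intercal\leq d_\metric$ is immediate from \eqref{equation inequality transverse length}, the above shows equality for every pair $L,L'\in\f$, so the identity $(M/\f,d_\intercal)\to (M/\f,d_\metric)$ is an isometry. The only place where one must be careful is ensuring the existence of the minimizing geodesic realizing $d_\metric(L,L')$; this is where closedness of $\f$ (not merely of $\overline{\f}$) is used, as it guarantees that the leaves are closed subsets of $M$ and that the infimum in the distance between them is attained by a horizontal geodesic.
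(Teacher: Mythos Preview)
Your proposal is correct and matches the paper's approach exactly: the paper offers no separate proof, only the remark that the corollary follows from the proof of Proposition \ref{proposition transverse diameter versus diameter of quotient}, and you have faithfully extracted that --- the pointwise identity $d_\metric(J,J')=d_\intercal(\f(\gamma(a)),\f(\gamma(b)))$ established there becomes $d_\metric(L,L')=d_\intercal(L,L')$ for all $L,L'$ once $\overline{\f}=\f$. Your added remark about why the minimizing geodesic exists (closedness of leaves) is a welcome clarification the paper leaves implicit.
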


Now let us focus on a non-closed Killing foliation $\f$ of a compact manifold $M$. Endow $\f$ with a bundle like metric $\metric$, and let $\g_i$ be a sequence of closed foliations converging regularly to $\f$. Consider the metric $\metric_i$ obtained by restricting $\metric$ to $T\g_i$ and $T\f^{\perp_\metric}$ and declaring that these distributions are orthogonal --- for that we assume, by ignoring the first terms of the sequence if necessary, that the distributions are always complementary. Notice that, therefore, the distributions $(T\f)^{\perp_\metric}$ and $(T\g_i)^{\perp_{\metric_i}}$ are the same. This way, the tensor field
\[(\metric_i)_\intercal(X,Y):=\metric_i(X^{\perp_{\metric_i}},Y^{\perp_{\metric_i}})=\metric(X^{\perp_{\metric_i}},Y^{\perp_{\metric_i}})\]
is precisely $\iota_i(\tmetric)$ from item (\ref{item deformation of transverse metric}) of Theorem \ref{theorem deformations} (see \cite[p. 11]{caramello} for the general construction). In particular, since $(\metric_i)_\intercal$ is therefore a $\g_i$-transverse Riemannian metric, $\metric_i$ is bundle-like for $\g_i$. It will be instructive to establish the following technical lemma.

\begin{lemma}\label{lemma: C0 convergence of metrics}
With the notation established above, $\metric_i\to \metric$ uniformly.
\end{lemma}

\begin{proof} 
Let $P\colon TM \to T\mathcal{F}$ be the $\metric$-orthogonal projection, and likewise, let $P_i\colon TM \to T\mathcal{G}_i$ be the oblique projection defined by the splitting $TM = T\mathcal{G}_i \oplus T\mathcal{F}^{\perp_\metric}$. Similarly to what we did in the proof of Proposition \ref{proposition uniform convergence of foliations}, let $\mathrm{Gr}^p(TM)^\pitchfork$ be the open subbundle of spaces complementary to $T\mathcal{F}^{\perp_\metric}$. It is not dificult to check that the map $\Psi\colon \mathrm{Gr}^p(TM)^\pitchfork \to \mathrm{End}(TM)$ that associates $E\in\mathrm{Gr}^p(T_xM)^\pitchfork$ to the projection $P_E\colon T_xM\to T_xM$ onto $E$ with respect to the decomposition $TM=E\oplus T_x\mathcal{F}^{\perp_\metric}$ is well defined and continuous. Moreover, we can suppose $\Psi$ is uniformly continuous, after restricting it to a compact tubular neighborhood containing the sections $T\mathcal{F}$ and $T\mathcal{G}_i$, for sufficiently large $i$. Since $T\mathcal{G}_i\to T\mathcal{F}$ uniformly, by Proposition \ref{proposition uniform convergence of foliations}, we thus obtain
\[\delta_i:=\sup_{x\in M} \|P_i-P\|_{\text{op}} = \sup_{x\in M} \|\Psi(T_x\mathcal{G}_i)-\Psi(T_x\mathcal{F})\|_{\text{op}}\longrightarrow 0.\]

Now notice that
\begin{align*}
\metric(v, w) & = \metric(Pv, Pw) + \metric((I - P)v, (I - P)w),\\
\metric_i(v, w) & = \metric(P_i v, P_i w) + \metric((I - P_i)v, (I - P_i)w).
\end{align*}
Hence, defining
\begin{align*}
A(v,w) & := \metric(P_i v, P_i w)-\metric(Pv, Pw),\\
B(v,w) & :=\metric((I - P_i)v, (I - P_i)w)-\metric((I - P)v, (I - P)w),
\end{align*}
we obtain
\begin{equation}\label{equation metrics converge uniformly} \sup_{v,w\in \mathbb{S}^1_\metric M} |\metric_i(v,w)-\metric(v,w)| \leq \sup_{v,w\in \mathbb{S}^1_\metric M} \big(|A(v,w)|+|B(v,w)|\big).\end{equation}
To bound $|A(v,w)|$, we write
\begin{align*}A(v,w) & = \metric(P_i v, P_i w) - \metric(Pv, P_i w) + \metric(Pv, P_i w) - \metric(Pv, Pw)\\
 & = \metric(P_i v- Pv, P_i w)+\metric(Pv, P_i w-Pw)
\end{align*}
and apply the Cauchy--Schwarz inequality:
\begin{align*}
|A(v,w)| & \leq \|P_i v - Pv\|_\metric \|P_i w\|_\metric + \|Pv\|_\metric \|P_i w - Pw\|_\metric\\
 & \leq \|P_i - P\|_{\text{op}} \|v\|_\metric(\|Pw\|_\metric + \|P_i w - Pw\|_\metric)\\
 & \phantom{=:} +\|Pv\|_\metric\|P_i - P\|_{\text{op}} \|w\|_\metric\\
 & \leq \delta_i(1+\delta_i)+\delta_i=2\delta_i+\delta_i^2.
\end{align*}
The term $|B(v,w)|$ is handled analogously. Notice that $(I - P_i) - (I - P) = P - P_i$, so  $\|(I - P_i) - (I - P)\|_{\text{op}} = \|P - P_i\|_{\text{op}} \leq \delta_i$, and that  $(I - P)$ is the $\metric$-orthogonal projection onto $T\mathcal{F}^{\perp_\metric}$, so $\|I - P\|_{\text{op}}=1$. Therefore the same Cauchy--Schwarz argument leads to the identical bound:
\[|B(v,w)|\leq 2\delta_i+\delta_i^2.\]
Thus \eqref{equation metrics converge uniformly} gives
\[\sup_{v,w\in \mathbb{S}^1_\metric M} |\metric_i(v,w)-\metric(v,w)|\leq 4\delta_i+2\delta_i^2 \longrightarrow 0.\qedhere\]
\end{proof}
\color{black}

We will henceforth denote by $\ell_i$ the length of curves induced by $\metric_i$ and by $d_i$ the corresponding induced metric. By abuse, the metric on $M/\g_i$ will also be denoted by $d_i$.

\begin{corollary}\label{corollary: uniform convergence of the metrics}
    With the notation established above, $d_i\to d_\metric$ uniformly, as metrics on $M$. Consequently, the same holds for the induced distances between subsets, i.e.,
    \[\lim_{i\to\infty} |d_i(A, B) - d_\metric(A, B)|=0,\]
    for any $A,B\subset M$.
\end{corollary}

\begin{proof}
    Let $\varepsilon>0$ be given. We claim that, for sufficiently large $i$, the bi-Lipschitz estimate
    \[(1 - \varepsilon)\|\cdot\|_\metric^2 \leq \|\cdot\|_{\metric_i}^2 \leq (1 + \varepsilon)\|\cdot\|_\metric^2\]
    holds. In fact, we know from Lemma \ref{lemma: C0 convergence of metrics} that $\metric_i\to \metric$ uniformly, hence $\|\ \|_{\metric_i}\to \|\ \|_\metric$ uniformly, and thus
\[\delta_i:=\sup_{w \in \mathbb{S}^1_\metric M} \big| \|w\|_{\metric_i}^2 - 1 \big| = \sup_{w \in \mathbb{S}^1_\metric M} \big| \|w\|_{\metric_i} - \|w\|_\metric \big| \longrightarrow 0.\]
    This yields
    \[(1-\delta_i)\|v\|_\metric^2\leq \|v\|_{\metric_i}^2\leq(1+\delta_i)\|v\|_\metric^2\]
    for any $v\in TM$, after writing $v=\|v\|_\metric(v/\|v\|_\metric)$. Therefore
    \[|d_i(x, y) - d_\metric(x, y)| \leq \varepsilon d_\metric(x, y) \leq \varepsilon \diam(M, d_\metric),\]
    for any $x, y \in M$, establishing that $d_i \to d_\metric$ uniformly on $M \times M$. This uniform convergence extends to the distance between subsets:
    \[|d_i(A, B) - d(A, B)| \leq \sup_{\substack{x \in A \\ y\in B}} |d_i(x, y) - d(x, y)| < \varepsilon,\]
    for sufficiently large $i$, independent of the choice of subsets $A$ and $B$.
\end{proof}

Let us now study the metrics induced on $M/\overline{\f}$ by $d_i$ and $d_\metric$.

\begin{proposition}\label{proposition g and gi induce same distance between leaf closures}
    Let $\f$ be a Killing foliation of a connected, compact manifold $M$ and let $\g_i$ be a regular sequence of closed foliations converging to $\f$. Let $\metric$ be a bundle-like metric for $\f$ and $\metric_i$ be the induced bundle-like metric for $\g_i$. Then $\metric_i$ is adapted to $\overline{\f}$ and $(M/\overline{\f},d_i)$ is isometric to $(M/\overline{\f},d_\metric)$, for each $i$.
\end{proposition}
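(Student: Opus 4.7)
The plan is to extract both assertions from the observation, already set out in the paragraph preceding the statement, that $T\overline{\f}^\perp$ is left invariant under the passage from $\metric$ to $\metric_i$ and that $\metric$ and $\metric_i$ restrict to the same inner product on this common distribution. Once this identification is firmly in hand the proposition splits into two independent tasks: showing that $\metric_i$ is adapted to $\overline{\f}$, and deducing the equality of the induced distances on $M/\overline{\f}$.

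For the adaptation claim, the crucial additional input is item (\ref{item quotient of deformation is orbit space of torus}) of Theorem \ref{theorem deformations}. Since $\g_i$ is closed and $\metric_i$ is bundle-like for it on the compact manifold $M$, the quotient $M/\g_i$ carries an induced Riemannian orbifold structure, and that item supplies an isometric action of a torus $\mathbb{T}^d$ on $M/\g_i$ whose orbit space is $M/\overline{\f}$ --- so the $\mathbb{T}^d$-orbits in $M/\g_i$ correspond to the leaf closures of $\overline{\f}$. The orbits of an isometric torus action on a Riemannian orbifold form a singular Riemannian foliation, and pulling this foliation back through the submersion $M \to M/\g_i$ shows that $\overline{\f}$ is singular Riemannian with respect to $\metric_i$, which is exactly the adaptation condition.

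With adaptation in hand, the isometry $(M/\overline{\f},d_i) \cong (M/\overline{\f},d_\metric)$ follows by the sandwich argument indicated just before the statement. Given leaf closures $J,J' \in \overline{\f}$, a $\metric$-minimizing geodesic $\gamma$ between them is $\metric$-orthogonal to every leaf closure it meets (Molino's theorem, since $\metric$ is adapted to $\overline{\f}$), so its tangent lies along $T\overline{\f}^{\perp_\metric}$, the distribution on which $\metric$ and $\metric_i$ agree. Hence $d_i(J,J') \leq \ell_i(\gamma) = \ell_\metric(\gamma) = d_\metric(J,J')$. The reverse inequality is obtained by repeating the argument with a $\metric_i$-minimizing geodesic, which we are now entitled to regard as $\metric_i$-orthogonal to $\overline{\f}$ by virtue of the adaptation just established.

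The main obstacle is the adaptation step. Because $\g_i$ is already closed we have $\overline{\g_i} = \g_i \neq \overline{\f}$, so one cannot conclude adaptation by applying Molino's theorem to $(M,\g_i,\metric_i)$ alone. The torus-action description in Theorem \ref{theorem deformations}(\ref{item quotient of deformation is orbit space of torus}) is the essential bridge between the closed foliation $\g_i$ and the singular foliation $\overline{\f}$; once the two are connected in this way, the remainder of the argument is a straightforward length comparison.
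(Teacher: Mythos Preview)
Your argument is correct and shares its core with the paper's: both rely on the observation that $T\overline{\f}^{\perp_\metric}=T\overline{\f}^{\perp_{\metric_i}}$ with $\metric=\metric_i$ on this common subbundle, and both deduce $d_\metric(J,J')=d_i(J,J')$ by a sandwich between a $\metric$-minimizing and a $\metric_i$-minimizing geodesic joining $J$ and $J'$.

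The genuine difference is the order in which the two conclusions are obtained and, consequently, how the ``reverse'' half of the sandwich is justified. The paper first runs the sandwich, asserting by the word \emph{similarly} that a $\metric_i$-minimizing geodesic $\gamma_i$ between $J$ and $J'$ is orthogonal to all leaf closures it meets, and only afterwards reads off that $\metric_i$ is adapted to $\overline{\f}$ from the coincidence of the two families of horizontal geodesics. You instead establish adaptation first, via Theorem~\ref{theorem deformations}\,(\ref{item quotient of deformation is orbit space of torus}): the isometric $\mathbb{T}^d$-action on the Riemannian orbifold $(M/\g_i,\iota_i(\tmetric))$ has orbit foliation a singular Riemannian foliation, and its pullback under the Riemannian submersion $M\to M/\g_i$ is $\overline{\f}$, so $\metric_i$ is adapted. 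With adaptation in hand, the orthogonality of $\gamma_i$ to $\overline{\f}$ is immediate and the sandwich closes cleanly. Your route makes explicit the justification that the paper's ``similarly'' leaves to the reader --- that step tacitly needs either the torus description or an equivalent reason why a $\metric_i$-geodesic perpendicular to one leaf closure stays perpendicular to all --- at the modest cost of invoking item~(\ref{item quotient of deformation is orbit space of torus}) rather than deriving adaptation purely from the length comparison.
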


\begin{proof}
By item (\ref{item deformation respects leaf closures}) of Theorem \ref{theorem deformations}, $\g_i$ is a subfoliation of $\overline{\f}$, hence
\[(T\overline{\f})^{\perp_{\metric_i}}\subset(T\g_i)^{\perp_{\metric_i}}=(T\f)^{\perp_\metric}.\]
Since also clearly $(T\overline{\f})^{\perp_{\metric}}\subset (T\f)^{\perp_\metric}$, and on $(T\f)^{\perp_\metric}$ the metrics $\metric$ and $\metric_i$ coincide by definition, it follows that
\[(T\overline{\f})^{\perp_{\metric}}=(T\overline{\f})^{\perp_{\metric_i}}.\]
We thus can henceforth unambiguously mention orthogonality to $\overline{\f}$ without refering to $\metric$ and $\metric_i$. Notice that the lenghts of vectors that are orthogonal to $\overline{\f}$, with respect $\metric$ and $\metric_i$, are also the same.

If a $\metric$-geodesic segment $\gamma$ minimizes the distance between $J,J'\in\overline{\f}$, then it is orthogonal to those leaf closures and hence to $\overline{\f}$. Similarly, the same occurs for a $\metric_i$-geodesic segment $\gamma_i$: if it realizes $d_i(J,J')$, then it is orthogonal to $J$ and $J'$, in particular $\metric_i$-orthogonal to $\g_i(\gamma_i(0))$, hence always $\metric_i$-orthogonal to $\g_i$, thus always $\metric$-orthogonal to $\f$. Therefore
\[d_\metric(J,J')=\ell_\metric(\gamma)=\ell_i(\gamma)\geq d_i(J,J')=\ell_i(\gamma_i)=\ell_\metric(\gamma_i)\geq d_\metric(J,J'),\]
giving us $d_\metric(J,J')=d_i(J,J')$. In particular, if $J'$ is contained within a $\metric$-tubular neighborhood of $J$ \cite[Proposition 16]{mendes} then any $\metric$-geodesic segment emanating orthogonaly from $J$ and ending in $J'$ is minimizing, hence also minimizing with respect to $d_i$. It therefore follows that geodesics with respect to $\metric$ and $\metric_i$ coincide when they are orthogonal to $\overline{\f}$ (since this holds locally), and thus $\metric_i$ is adapted to $\overline{\f}$.
\end{proof}

Since $M/\overline{\f}=(M/\g_i)/\mathbb{T}^d$ and the $\mathbb{T}^d$-action is isometric, it follows from Proposition \ref{proposition g and gi induce same distance between leaf closures} that
\[\diam(M/\overline{\f},d_\metric)=\diam(M/\overline{\f},d_i)\leq \diam(M/\g_i,d_i).\]
Coupling this with Proposition \ref{proposition transverse diameter versus diameter of quotient}, we obtain:

\begin{corollary}\label{corollary diameters of closed approximations}
    Let $\f$ be a Killing foliation of a connected, compact manifold $M$ and let $\g$ be a closed approximation of $\f$ given by a regular deformation. Then $\diam_\intercal (M/\f)\leq \diam_\intercal (M/\g)$, with respect to the (pseudo-)metrics induced by $\metric_\intercal$ and $\iota(\metric_\intercal)$, respectively.
\end{corollary}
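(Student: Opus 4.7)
The plan is to string together the two propositions just proved with item (\ref{item quotient of deformation is orbit space of torus}) of Theorem \ref{theorem deformations}. Fix a complete bundle-like metric $\metric$ for $\f$ associated to $\tmetric$, and let $\metric'$ denote the induced bundle-like metric for $\g$ constructed as in the paragraph preceding Proposition \ref{proposition g and gi induce same distance between leaf closures}, so that the $\g$-transverse metric coming from $\metric'$ is precisely $\iota(\tmetric)$; write $d'$ for the corresponding pseudodistance on $M/\g$.

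First I would apply Proposition \ref{proposition transverse diameter versus diameter of quotient} twice, once to the original Killing foliation $(M,\f,\tmetric)$ to obtain $\diam_\intercal(\f)=\diam(M/\overline{\f},d_\metric)$, and once to the closed Riemannian foliation $(M,\g,\iota(\tmetric))$, using $\overline{\g}=\g$, to obtain $\diam_\intercal(\g)=\diam(M/\g,d')$. Second, Proposition \ref{proposition g and gi induce same distance between leaf closures} tells me that on the common leaf-closure space $M/\overline{\f}$ the distances $d_\metric$ and $d'$ coincide, whence
\[\diam(M/\overline{\f},d_\metric)=\diam(M/\overline{\f},d').\]
Third, by Theorem \ref{theorem deformations}(\ref{item quotient of deformation is orbit space of torus}) the natural projection $M/\g\to M/\overline{\f}=(M/\g)/\mathbb{T}^d$ is the quotient by an isometric torus action, and is therefore $1$-Lipschitz, so $\diam(M/\overline{\f},d')\leq\diam(M/\g,d')$. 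Combining the three steps chains the two transverse diameters and yields $\diam_\intercal(\f)\leq\diam_\intercal(\g)$.

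There is no real obstacle at this stage, since the substantive inputs have already been handled in the two previous propositions; the corollary is the formal book-keeping step that packages them. The only point to be slightly careful about is that Proposition \ref{proposition transverse diameter versus diameter of quotient} requires completeness, which is automatic for $\f$ (it is Killing, hence complete by hypothesis) and for $\g$ (compactness of $M$ makes any bundle-like metric complete).
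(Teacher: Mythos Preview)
Your argument is correct and follows exactly the route the paper takes: apply Proposition~\ref{proposition transverse diameter versus diameter of quotient} to both $\f$ and $\g$, use Proposition~\ref{proposition g and gi induce same distance between leaf closures} to identify the two metrics on $M/\overline{\f}$, and then use that the isometric $\mathbb{T}^d$-quotient $M/\g\to M/\overline{\f}$ is distance non-increasing. The only addition you make is the explicit remark on completeness, which is harmless.
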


\section{Transverse diameter sphere theorem}

Armed with the results from Section \ref{section: diameter and deformations}, we can now proceed to the proof of Theorem \ref{theoremA} of the introduction. We re-state it here for convenience.

\begin{theorem}[Transverse diameter sphere theorem]\label{theorem transverse diameter sphere}
    Let $\f$ be a Killing foliation of codimention $q$ of a connected, compact manifold $M$, satisfying $\sec_\f>1$ and $\diam_\intercal(\f)>\pi/2$, and let $\mathfrak{a}$ be its structural algebra. Then $M$ admits a foliation $\g$, arbitrarily close to $\f$, such that $M/\g$ is homeomorphic to the quotient of $\mathbb{S}^q$ by a finite group $\Gamma<\mathrm{O}(q+1)$. Moreover there is a continuous $\mathbb{T}^d$-action on $\mathbb{S}^q/\Gamma$, where $d=\dim(\mathfrak{a})$, such that 
    \[\frac{M}{\overline{\f}}\cong \frac{\mathbb{S}^q/\Gamma}{\mathbb{T}^d}.\]
\end{theorem}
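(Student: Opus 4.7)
My approach would combine the deformation--approximation technique of Theorem \ref{theorem deformations} with Perelman's Alexandrov-geometric version of Grove--Shiohama. The plan is to approximate $\f$ by a closed Riemannian foliation $\g$ that inherits both the strict transverse curvature bound and the transverse diameter bound, and then to apply the Alexandrov sphere theorem to the orbifold leaf space $M/\g$.

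First, I would choose, via Theorem \ref{theorem deformations}, a closed foliation $\g$ of $M$ arbitrarily close to $\f$. Item (iv) of that theorem preserves the strict inequality $\sec_\g > 1$, and Corollary \ref{corollary diameters of closed approximations} guarantees $\diam_\intercal(\g) \geq \diam_\intercal(\f) \geq \pi/2$. Since $\g$ is closed, $\mathcal{O} := M/\g$ inherits the structure of a compact Riemannian orbifold of dimension $q$, and in particular of a compact Alexandrov space with curvature bounded below by $1$; by Proposition \ref{proposition transverse diameter versus diameter of quotient} applied to $\g$, its intrinsic diameter coincides with $\diam_\intercal(\g)$ and hence is at least $\pi/2$.

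The decisive step is then to apply Perelman's Alexandrov sphere theorem to $\mathcal{O}$ in order to conclude that $\mathcal{O}$ is homeomorphic to a spherical orbifold $\mathbb{S}^q/\Gamma$ with $\Gamma < \mathrm{O}(q+1)$ finite. For $\diam_\intercal(\g) > \pi/2$ strict, Perelman yields $\mathcal{O}$ as a spherical suspension of a lower-dimensional positively curved Alexandrov space; combined with the orbifold structure of $\mathcal{O}$ one identifies the resulting model with $\mathbb{S}^q/\Gamma$, where $\Gamma$ acts orthogonally. In the borderline case $\diam_\intercal(\g) = \pi/2$, one would exploit the strictness $\sec_\g > 1$, either to perturb slightly within the deformation family in order to achieve strict diameter, or to invoke an Alexandrov rigidity statement that still forces the spherical-quotient conclusion. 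I expect this to be the main obstacle of the argument, since handling the equality case, and reconciling Perelman's topological suspension description with a concrete orthogonal $\Gamma$-action, both require care.

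Finally, item (v) of Theorem \ref{theorem deformations} provides an isometric action of $\mathbb{T}^d$ on $M/\g$, with $d = \dim\mathfrak{a}$, whose orbit space is $M/\overline{\f}$. Transporting this action through the homeomorphism $M/\g \cong \mathbb{S}^q/\Gamma$ produces a continuous $\mathbb{T}^d$-action on $\mathbb{S}^q/\Gamma$ such that $M/\overline{\f} \cong (\mathbb{S}^q/\Gamma)/\mathbb{T}^d$, yielding the required identification.
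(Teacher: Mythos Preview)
Your outline is exactly the paper's proof. The two issues you flag as the ``main obstacle'' are in fact minor, and the paper dispatches them as follows.

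The borderline case $\diam(\mathcal{O})=\pi/2$ is a non-issue because the curvature bound is strict: by compactness of $M$ one has $\sec_\g\geq 1+\epsilon$ for some $\epsilon>0$, so $\mathcal{O}$ is an Alexandrov space with curvature $\geq 1+\epsilon$ and diameter $\geq \pi/2>\pi/(2\sqrt{1+\epsilon})$, and Perelman's suspension theorem applies directly. The paper does not even spell this out; it simply invokes Perelman with curvature bounded below by $1$.

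For the identification of $\Gamma$, the paper is more concrete than your sketch: pick a leaf $L\in\g$ realizing the diameter and choose $r$ small enough that the metric ball $B(L,r)$ lies inside a single orbifold chart of $\mathcal{O}=M/\g$. Then $\partial B(L,r)=\exp_L\bigl(\mathbb{S}^{q-1}(r)/\Gamma(L)\bigr)$, where $\Gamma(L)<\mathrm{O}(q)$ is the local orbifold group at $L$ (equivalently, the holonomy group of $L$ as a leaf of $\g$). Perelman gives $\mathcal{O}\cong\susp\bigl(\partial B(L,r)\bigr)\cong\susp\bigl(\mathbb{S}^{q-1}/\Gamma(L)\bigr)$, and letting $\Gamma(L)$ act on $\susp\mathbb{S}^{q-1}\cong\mathbb{S}^q$ by fixing the two poles yields $\mathcal{O}\cong\mathbb{S}^q/\Gamma(L)$, with $\Gamma=\Gamma(L)<\mathrm{O}(q)\hookrightarrow\mathrm{O}(q+1)$. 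The final $\mathbb{T}^d$-statement is exactly item~(\ref{item quotient of deformation is orbit space of torus}) of Theorem~\ref{theorem deformations}, as you say.
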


\begin{proof}
    Let $\g$ be a closed approximation of $\f$ given by a regular deformation. In particular, we can suppose $\sec_{\g}>1$, with respect to $\iota(\tmetric)$. Being transverse, this metric projects down to $\dslash{M}{\g}$, which is therefore a Riemannian orbifold satisfying $\sec_{\dslash{M}{\g}}>1$. Hence $M/\g$ is an Alexandrov space with curvature bounded below by $1$. By Proposition \ref{proposition transverse diameter versus diameter of quotient} and Corollary \ref{corollary diameters of closed approximations} we also have
    \[\diam(M/\g)=\diam_\intercal(\g)\geq \diam_\intercal(\f)>\pi/2.\]
    
    Choose $L$ and $L'$ realizing the diameter of $M/\g$. Then Perelman's generalization of the Grove--Shiohama theorem states that $M/\g$ is homeomorphic to the suspension $\susp(\partial B(L,r))$, for all $r\in(0,\diam(M/\g))$ (see \cite[\S 13.6]{BurGroPer92} or \cite[Theorem 4.5]{perelman}). Taking $r$ small enough so that $B(L,r)$ is contained in an orbifold chart around $L$, we have
    \[\partial B(L,r)=\exp_L(\mathbb{S}^{q-1}(r)/\Gamma),\]
    where $\mathbb{S}^{q-1}(r)$ is the sphere of radius $r$ in $(T_L(\dslash{M}{\g}),\iota(\tmetric)_L)$ and $\Gamma$ is the orbifold local group at $L$, which is isomorphic to the holonomy group of $L$ as a leaf of $\g$. Therefore, lifting the $\Gamma$-action to $\susp \mathbb{S}^{q-1}(r)$ by $h{\mkern 1mu\cdot\mkern 1mu}[v,t]:=[hv,t]$ and linearly identifying $\mathbb{S}^{q-1}(r)$ with the equator of $\mathbb{S}^q$, we get
    \[M/\g\cong \susp\left(\frac{\mathbb{S}^{q-1}(r)}{\Gamma}\right)\cong \frac{\susp \mathbb{S}^{q-1}(r)}{\Gamma}\cong\frac{\mathbb{S}^{q}}{\Gamma}.\]
    Moreover, $M/\overline{\f}\cong (M/\g)/\mathbb{T}^d$, by item (\ref{item quotient of deformation is orbit space of torus}) of Theorem \ref{theorem deformations}.
\end{proof}

We can, alternatively, describe $M/\overline{\f}$ as the quotient of a group action on $\mathbb{S}^q$ by lifting the action of $\mathbb{T}^d$. Let $G<\textnormal{Homeo}(\mathbb{S}^q,\mathbb{S}^q)$ be the subgroup consisting of homeomorphisms projecting to the homeomorphisms of $\mathbb{S}^q/\Gamma$ given by the action of $\mathbb{T}^d$ (see \cite{hafliger3} Section 1.2). As constructed, $G$ is an extension of $\mathbb{T}^d$ by $\Gamma$, that is, we have an exact sequence
    \[1\longrightarrow \Gamma\longrightarrow G\longrightarrow \mathbb{T}^d \longrightarrow 1.\]
Here $\Gamma\to G$ is injective, since the $\Gamma$-action on $\mathbb{S}^q$ is effective. Moreover, $G\to\mathbb{T}^d$, given by the assignment $f\mapsto t$, for $t\in\mathbb{T}^d$ the element whose action coincides with the projection of $f$, is well-defined by the effectiveness of $\mathbb{T}^d$ and surjective by construction. Exactness follows from the fact that $\Gamma$ acts by homeomorphisms that project to the identity. Thus we have
    \[\frac{\mathbb{S}^q}{G}\cong\frac{\mathbb{S}^q/\Gamma}{G/\Gamma}\cong\frac{\mathbb{S}^q/\Gamma}{\mathbb{T}^d},\]
 where the action of $G/\Gamma$ on $\mathbb{S}^q/\Gamma$ is given via the isomorphism $G/\Gamma\cong\mathbb{T}^d$. Thus, we have established the following:

\begin{corollary}\label{corollary of theorem A}
    In the conditions of Theorem \ref{theorem transverse diameter sphere}, there exists an extension $G$ of $\mathbb{T}^d$ by $\Gamma$ such that
    \[\frac{M}{\overline{\f}}\cong \frac{\mathbb{S}^q}{G}.\]
\end{corollary}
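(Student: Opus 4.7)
My plan is to realize $G$ explicitly as the group of homeomorphisms of $\mathbb{S}^q$ that descend to the $\mathbb{T}^d$-symmetries of $\mathbb{S}^q/\Gamma$ produced by Theorem \ref{theorem transverse diameter sphere}. Writing $\pi : \mathbb{S}^q \to \mathbb{S}^q/\Gamma$ for the quotient map and $\phi$ for the $\mathbb{T}^d$-action on $\mathbb{S}^q/\Gamma$, I would set
\[
G := \{f \in \textnormal{Homeo}(\mathbb{S}^q) : \pi \circ f = \phi_t \circ \pi \text{ for some } t \in \mathbb{T}^d\}.
\]
Closure under composition and inversion is immediate, and $\Gamma \subset G$ follows since every $h \in \Gamma$ satisfies $\pi \circ h = \pi = \phi_0 \circ \pi$.

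The next step is to fit $G$ into a short exact sequence
\[
1 \to \Gamma \to G \to \mathbb{T}^d \to 1.
\]
The assignment $f \mapsto t$ is well-defined provided the $\mathbb{T}^d$-action on $\mathbb{S}^q/\Gamma$ is effective, which can be arranged by first quotienting $\mathbb{T}^d$ by the closed kernel of the action (the result is still a torus). Injectivity of $\Gamma \hookrightarrow G$ follows from the fact that $\Gamma < \mathrm{O}(q+1)$ acts effectively on $\mathbb{S}^q$, and exactness in the middle reduces to the elementary observation that a homeomorphism of $\mathbb{S}^q$ that permutes every $\Gamma$-orbit must agree pointwise with some element of the finite group $\Gamma$.

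The delicate step, and the only non-formal one, is surjectivity of $G \to \mathbb{T}^d$: one has to show that each $\phi_t$ actually admits a lift in $\textnormal{Homeo}(\mathbb{S}^q)$. For this I would appeal to the orbifold-theoretic lifting machinery described in Haefliger \cite{hafliger3}, \S 1.2, applied to the orbifold cover $\mathbb{S}^q \to \mathbb{S}^q/\Gamma$: since the $\mathbb{T}^d$-action preserves the orbifold structure induced by $\Gamma$, and $\mathbb{T}^d$ is connected, each $\phi_t$ can be lifted equivariantly. Granted the short exact sequence, the remainder is purely formal: the $G$-action on $\mathbb{S}^q$ descends on $\mathbb{S}^q/\Gamma$ to an action of $G/\Gamma \cong \mathbb{T}^d$ coinciding by construction with $\phi$, hence
\[
\frac{\mathbb{S}^q}{G} \cong \frac{\mathbb{S}^q/\Gamma}{G/\Gamma} \cong \frac{\mathbb{S}^q/\Gamma}{\mathbb{T}^d} \cong \frac{M}{\overline{\f}},
\]
where the last identification is supplied by Theorem \ref{theorem transverse diameter sphere}.
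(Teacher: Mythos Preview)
Your proposal follows essentially the same route as the paper: both define $G$ as the subgroup of $\textnormal{Homeo}(\mathbb{S}^q)$ consisting of lifts of the $\mathbb{T}^d$-action, appeal to \cite{hafliger3}, \S1.2 for the lifting, verify the short exact sequence $1\to\Gamma\to G\to\mathbb{T}^d\to1$, and conclude via $\mathbb{S}^q/G\cong(\mathbb{S}^q/\Gamma)/\mathbb{T}^d\cong M/\overline{\f}$. You are slightly more explicit than the paper about where the nontrivial work lies (surjectivity via orbifold lifting and the effectiveness assumptions needed for well-definedness), but the argument is the same.
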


\section{Transverse quarter-pinched sphere theorem}

In this final section we will establish our other two main results. We start with the following technical lemma to be used in the proof of Theorem \ref{theoremB}, which in turn will be the main ingredient in the proof Theorem \ref{theoremC}.

\begin{lemma}\label{lemma: diameter on leaf clo}
    Let $\f$ be a Killing foliation of a connected, compact manifold $M$ and let $\g_i\to \f$ be a regular sequence. Let $\metric$ be a bundle-like metric for $\f$ and $\metric_i$ be the induced bundle-like metric for $\g_i$. Then, for any $J\in \overline{\f}$,
    \[\lim_{i\to\infty}\diam(J/\g_i,d_i)=0,\]
    where $d_i$ is the metric induced by $\metric_i$.
\end{lemma}

\begin{proof}
    Because $\g_i \to \f$, the leaves of $\g_i$ inside $J$ increasingly approximate the leaves of $\f$, which are all dense in $J$. Thus, for any given pair of leaves $L_i,L_i'\in \g_i|_J$, we have $d_\metric(L_i,L_i')\to 0$. In particular, given $\varepsilon>0$, there exists $N_1\in\mathbb{N}$ such that $d_\metric(L_i,L_i')<\varepsilon/2$ whenever $i\geq N_1$. Additionally, Corollary \ref{corollary: uniform convergence of the metrics} tells us that there exists $N_2\in\mathbb{N}$ such that $|d_i(L_i,L_i')-d_\metric(L_i,L_i')|<\varepsilon/2$, whenever $i\geq N_2$. Therefore, for all $i\geq \max\{N_1,N_2\}$,
    \[d_i(L_i,L_i')\leq d_\metric(L_i,L_i')+|d_i(L_i,L_i')-d_\metric(L_i,L_i')|<\frac{\varepsilon}{2}+\frac{\varepsilon}{2}=\varepsilon,\]
    thus $\diam(J/\g_i,d_i)\to 0$.
\end{proof}

\begin{theorem}\label{theorem regular convergence implies GH convergence}
    Let $\f$ be a Killing foliation of a connected, compact manifold $M$ and let $\g_i\to \f$ be a regular sequence. Let $\metric$ be a bundle-like metric for $\f$ and $\metric_i$ be the induced bundle-like metric for $\g_i$. Then
    \[(M/\g_i,d_i)\xrightarrow{\textnormal{GH}} (M/\overline{\f},d_\metric).\]
\end{theorem}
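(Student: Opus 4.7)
I would prove this by recognising $\pi_i : (M/\g_i, d_i) \to (M/\overline{\f}, d_\metric)$ as a submetry whose fibers shrink uniformly. By Proposition~\ref{proposition g and gi induce same distance between leaf closures} the distance $d_\metric$ agrees with $d_i$ on $M/\overline{\f}$, and by item~(v) of Theorem~\ref{theorem deformations} one has $(M/\overline{\f}, d_\metric) \cong (M/\g_i)/\mathbb{T}^d$ with the $\mathbb{T}^d$-action isometric on $(M/\g_i, d_i)$. Thus $\pi_i$ is a submetry, and the standard estimate for submetries gives
\[d_{GH}\bigl((M/\g_i, d_i),\, (M/\overline{\f}, d_\metric)\bigr) \leq \sup_{p \in M/\overline{\f}} \diam_i\bigl(\pi_i^{-1}(p)\bigr).\]
Since $\pi_i^{-1}([\overline{\f(x)}]) = \{\g_i(y) : y \in \overline{\f(x)}\}$, it suffices to show that these $\mathbb{T}^d$-orbits have diameter tending to $0$ uniformly in $x$ as $i \to \infty$.

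The key intermediate claim is: for every $\epsilon > 0$ there exists $N$ such that, for all $i \geq N$ and all $x \in M$, the leaf $\g_i(x)$ is $\epsilon$-dense in $\overline{\f(x)}$ with respect to $d_\metric$. To establish it I would use the Haefliger--Salem realisation recalled in Section~\ref{section preliminaries}: there is a smooth map $\Upsilon : M \to \mathcal{O}$ with $\f = \Upsilon^{*}(\f_H)$, where $\f_H$ is given by the orbits of a dense, contractible subgroup $H$ of a torus $\mathbb{T}^N$ on the orbifold $\mathcal{O}$, and the deformation yields $\g_i = \Upsilon^{*}(\f_{K_i})$ for closed subgroups $K_i < \mathbb{T}^N$ with Lie algebras $\mathfrak{h}_i \to \mathfrak{h}$ in the Grassmannian. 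A Lie-theoretic argument, in the spirit of rational approximations becoming uniformly dense on an irrationally foliated torus, shows the $K_i$-orbits are uniformly $\epsilon$-dense in the $\overline{H}$-orbits for large $i$, and the smoothness of $\Upsilon$ on the compact $M$ then transfers this to the density claim. Granted the claim, for $y \in \overline{\f(x)}$ I pick $x' \in \g_i(x)$ with $d_\metric(x',y) < \epsilon$; then $\g_i(x') = \g_i(x)$ gives
\[d_i(\g_i(x),\g_i(y)) = d_i(\g_i(x'),\g_i(y)) \leq d_{\metric_i}(x',y),\]
and since $\metric_i$ is obtained from $\metric$ merely via the orthogonalisation $T\g_i \perp T\f^{\perp_\metric}$, the bundle-like metrics $\metric_i$ converge uniformly to $\metric$ on the compact $M$, yielding $d_{\metric_i}(x',y) < 2\epsilon$ for $i$ large. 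The fiber diameters therefore go to $0$ uniformly, proving the theorem.

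The hardest part is the density claim itself: the Grassmannian convergence $\mathfrak{h}_i \to \mathfrak{h}$ does not, a priori, imply that the $K_i$-orbits fill $\overline{H}$ at a uniform rate. The mechanism should be that requiring $K_i$ to be closed forces $\mathfrak{h}_i$ to be rational with respect to the lattice of $\mathbb{T}^N$, so that the ``denominators'' of $\mathfrak{h}_i$ blow up as $\mathfrak{h}_i$ approaches the non-rational $\mathfrak{h}$, forcing the $K_i$-orbits to wind tightly within $\overline{H}$. Making this quantitative, and uniform in the basepoint on $\mathcal{O}$, is where the principal technical work lies.
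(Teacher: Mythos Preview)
Your proposal is correct and takes a more streamlined route than the paper. The paper proceeds by building, for each $\varepsilon>0$, a finite $\varepsilon$-net $\{J_1,\dots,J_r\}$ in $M/\overline{\f}$ out of tubular neighborhoods, then choosing $L_s\in\g_i$ inside each $J_s$, and verifying by hand that $\{L_s\}$ is $\varepsilon$-dense in $(M/\g_i,d_i)$ and that $|d_i(L_s,L_{s'})-d_\metric(J_s,J_{s'})|<\varepsilon$, concluding via the standard $\varepsilon$-net criterion for Gromov--Hausdorff convergence. You instead observe directly that $\pi_i$ is a submetry (quotient by an isometric torus action, combined with Proposition~\ref{proposition g and gi induce same distance between leaf closures}), so that $d_{GH}$ is controlled by the supremum of fiber diameters; this bypasses the explicit net construction and the separate verification of the distance-matching inequality. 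Both arguments, however, rest on exactly the same core fact: the $d_i$-diameters of the sets $\{\g_i(y):y\in\overline{\f(x)}\}$ tend to $0$. The paper simply asserts this in one line (``since $\g_i\to\f$, we can choose $k$ so that $d_i(L,L')<\varepsilon/2$ for any $L,L'\subset J_j$''), while you go further, reducing it to the leafwise density claim and correctly identifying the Haefliger--Salem model and the rationality of the approximating subalgebras $\mathfrak{h}_i$ as the mechanism. Your density claim is slightly stronger than what is strictly needed (small $d_i$-fiber-diameter does not require each $\g_i$-leaf to be $\varepsilon$-dense in the ambient $\metric$-metric), but it is the natural thing to prove and your derivation of the fiber bound from it is sound. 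In short: the paper's proof is more explicit at the level of GH bookkeeping but less so at the crucial step; your submetry packaging is cleaner and you are more candid about where the genuine work lies.
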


\begin{proof}
    Let $\varepsilon>0$ be given. For each $J\in\overline{\f}$, choose a tubular neighborhood $\mathrm{Tub}(J)$ of radius smaller than $\varepsilon/2$ \cite[Proposition 16]{mendes}. This provides an open covering of $M$ from which we can extract, by compactness, a finite sub-cover $\{\mathrm{Tub}(J_1),\dots,\mathrm{Tub}(J_r)\}$. Therefore $\{J_1,\dots, J_r\}$ is an $\varepsilon/2$-dense (hence also $\varepsilon$-dense) set in $M/\overline{\f}$. Since $\g_i\to\f$ regularly, for each of the leaf closures $J_j$, Lemma \ref{lemma: diameter on leaf clo} guarantees that there is $k_j\in\mathbb{N}$ so that $i\geq k_j$ implies $d_i(L_i,L_i')<\varepsilon/2$, for any $L_i,L_i'\in\g_i|_{J_j}$. We have only a finite number $r$ of leaf closures to deal with, so taking $k:=\max_j{k_j}$ we ensure $d_i(L_i,L_i')<\varepsilon/2$, for any $L_i,L_i'\in\g_i$ that are both contained in one of the leaf closures $J_j$, provided $i\geq k$.

    Now choose a leaf $L_s\in\g_i$ inside $J_s$, for each $s=1,\dots,r$. We claim that $\{L_1,\dots,L_r\}$ is an $\varepsilon$-dense set in $M/\g_i$, when $i\geq k$. In fact, given any $\g_i(x)$, we know that there is some $J_{s(x)}$ with $d_\metric(\overline{\f}(x),J_{s(x)})< \varepsilon/2$. A $\metric$-geodesic segment $\gamma$ realizing that distance must be orthogonal to $\overline{\f}(x)$ and $J_{s(x)}$ and hence to all leaves of $\overline{\f}$ that it intersects. As we saw earlier, it is therefore also a $\metric_i$-geodesic segment and $\ell_i(\gamma)=\ell_\metric(\gamma)$. Therefore, if $y\in J_{s(x)}$ is the endpoint of $\gamma$, then
    \[d_i(\g_i(x),\g_i(y))\leq \ell_i(\gamma) = \ell_\metric(\gamma) = d_\metric(\overline{\f}(x),J_{s(x)})< \varepsilon/2.\]
    Since $\g_i(y)\subset J_{s(x)}$, we know that $d_i(\g_i(y),L_{s(x)})< \varepsilon/2$, and hence
    \[d_i(\g_i(x),L_{s(x)})\leq d_i(\g_i(x),\g_i(y))+d_i(\g_i(y),L_{s(x)})<\varepsilon/2+\varepsilon/2=\varepsilon.\]

    From Proposition \ref{proposition g and gi induce same distance between leaf closures} we have $d_\metric(J_s,J_{s'})=d_i(J_s,J_{s'})$. Moreover, we have $d_i(L_s,L_{s'})\geq d_i(J_s,J_{s'})$, since $L_s\subset J_s$ and $L_{s'}\subset J_{s'}$, hence
    \begin{equation}\label{equation theorem regular convergence implies GH convergence}
\begin{aligned}
    |d_i(L_s,L_{s'})-d_\metric(J_s,J_{s'})| & =|d_i(L_s,L_{s'})-d_i(J_s,J_{s'})|\\
     & =d_i(L_s,L_{s'})-d_i(J_s,J_{s'}).
\end{aligned}
    \end{equation}
    If $\zeta$ is a $\metric_i$-geodesic segment from $x_s\in J_s$ to $x_{s'}\in J_{s'}$ realizing $d_i(J_s,J_{s'})$, then
    \[d_i(\g_i(x_s),\g_i(x_{s'}))\leq \ell_i(\zeta)=d_i(J_s,J_{s'})\leq d_i(\g_i(x_s),\g_i(x_{s'})),\]
    so equality holds. Substituting in \eqref{equation theorem regular convergence implies GH convergence} and using the triangle inequality,
    \begin{align*}
        |d_i(L_s,L_{s'})-d(J_s,J_{s'})| & = d_i(L_s,L_{s'})-d_i(\g_i(x_s),\g_i(x_{s'}))\\
        & \leq d_i(L_s,\g_i(x_s))+d_i(\g_i(x_s),\g_i(x_{s'}))\\
        & \phantom{=:} +d_i(\g_i(x_{s'}),L_{s'})-d_i(\g_i(x_s),\g_i(x_{s'}))\\
        & = d_i(L_s,\g_i(x_s))+d_i(\g_i(x_{s'}),L_{s'})\\
        &<\varepsilon/2+\varepsilon/2=\varepsilon.
    \end{align*}

    Summing up, we have built $\varepsilon$-dense sets $\{L_1,\dots, L_r\}$ and $\{J_1,\dots,J_r\}$ in $(M/\g_i,d_i)$ and $(M/\overline{\f},d_\metric)$, respectively, such that 
\[ |d_i(L_s,L_{s'})-d_\metric(J_s,J_{s'})| < \varepsilon\]
for all indices $s,s'\in\{1,\dots,r\}$. It then follows by \cite[Example 56]{petersen} that $d_{GH}(M/\g_i,M/\overline{\f})<3\varepsilon$, and so we conclude that $M/\g_i\xrightarrow{\textnormal{GH}} M/\overline{\f}$.
\end{proof}

We stress that the fact that $M/\overline{\f}$ is a Gromov--Hausdorff limit of orbifolds was already known from \cite[Corollary 1.6]{desmarcos} (applied to $\overline{\f}$), in which the orbifolds are obtained via blow-ups. The novelty in Theorem \ref{theorem regular convergence implies GH convergence} is that, in the particular case of Killing foliations, the orbifolds can be taken as $M/\g_i$, which allows much more control of their geometry. We can now, for instance, establish Theorem \ref{theoremC} from the introduction.

\begin{theorem}[Transverse quarter-pinched sphere theorem]\label{teoprincipal}
    Let $\f$ be a $q$-codimensional complete Riemannian foliation of a connected manifold $M$, such that $1/4<\sec_\f< 1$ and $q\geq3$. Then $\f$ develops to a simple foliation $\hat{\f}$ of the universal covering $\hat{M}$, given by the fibers of a submersion onto $\mathbb{S}^q$.
\end{theorem}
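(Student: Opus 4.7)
The plan is to lift $\f$ to a Killing foliation $\hat{\f}$ on the universal cover $\hat{M}$, apply Theorem \ref{theorem regular convergence implies GH convergence} to approximate it by closed Riemannian foliations whose leaf spaces converge to $\hat{M}/\overline{\hat{\f}}$ in Gromov--Hausdorff distance, identify these approximating leaf spaces as spheres via the orbifold quarter-pinched sphere theorem, and pass to the limit.

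First, I would pull $\f$ back along $\pi:\hat{M}\to M$ to $\hat{\f}:=\pi^*\f$. Simple connectedness of $\hat{M}$ makes the Molino sheaf of $\hat{\f}$ globally constant, so $\hat{\f}$ is Killing with the same transverse geometry as $\f$; in particular $1/4<\sec_{\hat{\f}}<1$ and $\codim(\hat{\f})=q\geq 3$. The bound $\sec_{\hat{\f}}>1/4$ yields, via a transverse Bonnet--Myers argument, that $\hat{M}/\overline{\hat{\f}}$ is compact. Working with Haefliger's compact orbifold classifying space of $\mathscr{H}_{\hat{\f}}$ (or with a suitable compact saturated model) reduces the argument to the compact setting required by Theorems \ref{theorem deformations} and \ref{theorem regular convergence implies GH convergence}.

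By Theorem \ref{theorem regular convergence implies GH convergence} one then obtains a regular sequence $\g_i\to\hat{\f}$ of closed Riemannian foliations with $\hat{M}/\g_i\xrightarrow{\mathrm{GH}}\hat{M}/\overline{\hat{\f}}$, and by Theorem \ref{theorem deformations}(iv) the strict pinching passes to the approximations, so each $\hat{M}/\g_i$ is a $q$-dimensional Riemannian orbifold with $1/4<\sec<1$. The orbifold Böhm--Wilking theorem then makes $\hat{M}/\g_i\cong\tilde N_i/\Gamma_i$ finitely developable with $\Gamma_i$ a finite group of isometries, and Brendle--Schoen identifies $\tilde N_i\cong\mathbb{S}^q$ diffeomorphically. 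To eliminate the $\Gamma_i$ I would combine the deformation invariance of $\pi_1(\f)$ proved in \cite{caramello2} with the surjection \eqref{surjection fundamental groups}: simple connectedness of $\hat{M}$ forces $\pi_1(\g_i)=\pi_1(\hat{\f})=1$, while for a closed Riemannian foliation of a compact manifold $\pi_1(\g_i)\cong\pi_1^{\mathrm{orb}}(\hat{M}/\g_i)=\Gamma_i$ (using that $\mathbb{S}^q$ is simply connected for $q\geq 2$). Hence each $\Gamma_i$ is trivial, each $\hat{M}/\g_i$ is diffeomorphic to $\mathbb{S}^q$, and each $\g_i$ is a simple Riemannian foliation given by the fibers of a submersion $\pi_i:\hat{M}\to\mathbb{S}^q$.

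The main obstacle is the final step: showing that $\hat{\f}$ itself, not only its closed approximations, is simple with leaf space $\mathbb{S}^q$. By Theorem \ref{theorem deformations}(v), $\hat{M}/\overline{\hat{\f}}\cong\mathbb{S}^q/\mathbb{T}^d$ with $d=\dim\mathfrak{a}$, so the task reduces to showing $d=0$, equivalently that $\hat{\f}$ is closed (and hence, since $\pi_1(\hat{\f})=1$, simple). I would approach this by applying Cheeger--Gromov precompactness --- available thanks to the uniform two-sided bound $1/4<\sec<1$ on the metrics $g_i$ induced on $\hat{M}/\g_i\cong\mathbb{S}^q$ --- to extract a smoothly convergent subsequence $(\mathbb{S}^q,g_i)\to(\mathbb{S}^q,g_\infty)$, and then combine this with the tangential convergence $\g_i\to\hat{\f}$ to upgrade the submersions $\pi_i$ to a limit submersion $\pi_\infty:\hat{M}\to(\mathbb{S}^q,g_\infty)$ whose fibers are precisely the leaves of $\hat{\f}$. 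The delicate point, where the strict upper bound $\sec_{\hat{\f}}<1$ is essential, is ruling out Gromov--Hausdorff collapsing of the $(\mathbb{S}^q,g_i)$ --- which would correspond exactly to $d>0$, as in the borderline case $\sec_{\hat{\f}}\equiv 1$ where Hopf-type examples carry nontrivial $\mathfrak{a}$; the simple connectedness of $\hat{M}$ together with the strict pinching in codimension $q\geq 3$ should force $d=0$ and so close the argument.
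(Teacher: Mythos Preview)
Your outline matches the paper's proof almost step for step: lift to the universal cover, pass to a compact simply connected model via Haefliger--Salem, take a regular sequence $\g_i'$, identify each $M'/\g_i'$ with $\mathbb{S}^q$ via B\"ohm--Wilking and Brendle--Schoen together with $\pi_1(\g_i')=1$, and then argue that the structural algebra vanishes. The one substantive gap is the last step.

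You correctly isolate the obstacle---ruling out collapse, i.e.\ showing $d=0$---but your proposed mechanism is circular. You invoke Cheeger--Gromov precompactness ``available thanks to the uniform two-sided bound $1/4<\sec<1$'', yet a two-sided curvature bound alone does \emph{not} give precompactness: Berger spheres collapse with uniformly bounded curvature. Precompactness in the smooth sense already requires a uniform lower bound on volume or injectivity radius, which is exactly the non-collapse input you are trying to obtain. So the sentence ``should force $d=0$'' is a hope, not an argument, and the limit-submersion construction never gets off the ground without it.

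The paper closes this gap directly and avoids any limit-submersion argument. Since each $M'/\g_i'$ is a simply connected manifold with $1/4<\sec<1$, the Klingenberg--Cheeger injectivity radius estimate \cite{cheeger} gives $\mathrm{inj}(M'/\g_i')\geq\pi$, and then G\"unther's volume comparison yields $\vol(M'/\g_i')>\vol(B^{\mathbb{S}^q}(\pi))$, a uniform positive lower bound independent of $i$. By \cite[Corollary 10.10.11]{burago} the Gromov--Hausdorff limit $M'/\overline{\f'}$ then has Alexandrov dimension $q$, so $d=0$ and $\f'$ is closed. One then applies the orbifold sphere theorem once more, directly to $\dslash{M'}{\f'}$, to conclude $M'/\f'\cong\mathbb{S}^q$; there is no need to build a limiting submersion. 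This is where both the pinching hypothesis and simple connectedness are genuinely used, and it is the missing idea in your proposal.
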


\begin{proof}
    Let $p:\hat{M}\to M$ be the universal covering of $M$ and $\hat{\f}:=p^*(\f)$ the corresponding lift of $\f$, endowed with the pullback transverse metric $\hat{\metric}_\intercal:=p^*(\tmetric)$. Then $\hat{\f}$ is a Killing foliation, since $\hat{M}$ is simply connected and the pullback of any complete bundle-like metric for $\f$ associated to $\tmetric$ is complete and bundle-like for $\hat{\f}$. Moreover, it is clear by construction that $1/4<\sec_{\hat{\f}}<1$. By the transverse Bonnet--Myers theorem \cite[Theorem 1]{hebda}, it follows that $\hat{M}/\hat{\f}$ is compact. Therefore $\hat{M}/\overline{\hat{\f}}$ is also compact (see \cite[Section 4.2]{caramello3}).

    We now consider the holonomy pseudogroup $\mathscr{H}_{\hat{\f}}$ of $\hat{\f}$, acting on a total transversal $T$. As mentioned in Section \ref{section preliminaries}, there is a surjective homomorphism
    \[\pi_1(\hat{M})\longrightarrow\pi_1(\mathscr{H}_{\hat{\f}}),\]
    hence $\mathscr{H}_{\hat{\f}}$ is simply connected. Moreover, $T/\overline{\mathscr{H}_{\hat{\f}}}\cong\hat{M}/\overline{\hat{\f}}$ is compact (see also \cite[Proposition 2.3]{molino}), therefore we can apply \cite[Theorem 3.7]{haefliger2} to obtain a Killing foliation $\f'$, of a \emph{compact}, simply connected manifold $M'$, such that $\mathscr{H}_{\hat{\f}}$ is differentiably equivalent to $\mathscr{H}_{\f'}$. Via this transverse equivalence, $\hat{\metric}_\intercal$ induces a transverse metric $\tmetric'$ for $\f'$, with $1/4<\sec_{\f'}< 1$.

    Let $\g_i'\to\f'$ regularly. Then, by Theorem \ref{theorem regular convergence implies GH convergence},
    \begin{equation}
    M'/\g_i'\xrightarrow{\textnormal{GH}}M'/\overline{\f'}.
    \label{equation: proof transverse quarter pinched GH convergence}
    \end{equation}
    Moreover, each $\dslash{M'}{\g_i'}$ is a connected, compact Riemannian orbifold satisfying $1/4<\sec_{\dslash{M'}{\g_i'}}<1$ and $\dim(\dslash{M'}{\g_i'})\geq 3$. By \cite[Proposition 5.2]{bohm} and \cite{brendle} it follows that $\dslash{M'}{\g_i'}$ is orbifold-diffeomorphic to the quotient of $\mathbb{S}^q$ by a finite group $G_i$, as we already discussed in the introduction. Furthermore, as $M'$ is simply connected,
    \[0=\pi_1(\mathscr{H}_{\g_i'})\cong\pi_1^{\mathrm{orb}}(\dslash{M'}{\g_i'})\cong G_i,\]
    therefore each $\dslash{M'}{\g_i'}$ is actually diffeomorphic to $\mathbb{S}^q$.

    We now argue that there is no collapse in the convergence \eqref{equation: proof transverse quarter pinched GH convergence}. In fact, notice that $\dslash{M'}{\g_i'}$ is a simply connected, complete Riemannian manifold satisfying $1/4<\sec_{\dslash{M'}{\g_i'}}<1$, for each $i$. By \cite[Theorem 6]{cheeger} it then follows that its injectivity radius verifies $\text{inj}(\dslash{M'}{\g_i'})\geq\pi$. Moreover, by Günther's volume comparison \cite[Theorem III.4.2]{chavel}, for any fixed $L_i\in\g_i'$,
    \[\vol(\dslash{M'}{\g_i'})\geq\vol(B_i(L_i,\pi))>\vol(B^{\mathbb{S}^q}(\pi)),\]
    where $B^{\mathbb{S}^q}(\pi)$ is a geodesic ball of radius $\pi$ on the round unit sphere. It then follows by \cite[Corollary 10.10.11]{burago} that there is no collapse, that is, the dimension of $M'/\overline{\f'}$, as an Alexandrov space, is $q$. This dimension coincides with the codimension of the restriction of $\overline{\f'}$ to its regular stratum (see, e.g., \cite[Section 1]{grove2}); that is, a leaf closure $\overline{L'}$ of maximal dimension has codimension $q=\codim (L')$. In particular, the structural algebra of $\f'$ is trivial, hence $\f'$ is a closed foliation. Therefore $\dslash{M'}{\f'}$ is a connected, compact Riemannian orbifold with $1/4<\sec_{\dslash{M'}{\f'}}<1$ and $\dim(\dslash{M'}{\f'})\geq 3$, so, as before, we obtain a diffeomorphism between $\dslash{M'}{\f'}$ and $\mathbb{S}^q$. The differentiable equivalence between $\mathscr{H}_{\hat{\f}}$ and $\mathscr{H}_{\f'}$ induces an orbifold diffeomorphism between $\dslash{M'}{\f'}$ and $\dslash{\hat{M}}{\hat{\f}}$, thus giving us the result.
\end{proof}

\subsubsection*{Acknowledgements} We are grateful to Prof. Dirk Töben, Prof. Ivan Pontual and Dr. Henrique Martins for the insightful suggestions and helpful discussions. This study was financed in part by the Coordenação de Aperfeiçoamento de Pessoal de Nível Superior – Brasil (CAPES) – Finance Code 001.

\end{document}